\newtheorem{theorem}{Theorem}[section]
\newtheorem{lemma}[theorem]{Lemma}
\newtheorem{prop}[theorem]{Proposition}
\theoremstyle{definition}
\newtheorem{remark}[theorem]{Remark}
\def\h(#1,#2){\mbox{Hom}\left(#1,#2\right)}
\def\t(#1,#2){\mbox{Tor}\left(#1,#2\right)}
\def\e(#1,#2){\mbox{Ext}\left(#1,#2\right)}
\def\CP{\mathbb{C}\mathbf{P}}
\def\RP{\mathbb{R}\mathbf{P}}
\def\C{\mathbb{C}}
\def\F{\mathbb{F}}
\def\N{\mathbb{N}}
\def\R{\mathbb{R}}
\def\Z{\mathbb{Z}}
\def\xx{\mathbf{x}}
\def\yy{\mathbf{y}}
\def\per{\mathsf{per}}
\def\LPCA{\mathsf{LPCA}}
\def\LastLensComp{\mathsf{LastLensComp}}
\def\var{\mathsf{var}}
\def\cov{\mathsf{Cov}}
\def\linspan{\mathsf{span}}
\def\dgm{\mathsf{dgm}}
\def\<{\langle}
\def\>{\rangle}
\newcommand{\ppart}[1]{\left|#1 \right|_{+}}
\newcommand{\norm}[1]{\left|#1 \right|}
\DeclareMathOperator*{\argmin}{\arg\!\min\;}
\DeclareMathOperator*{\argmax}{\arg\!\max\;}
\title{Coordinatizing Data With Lens Spaces and Persistent Cohomology}
\author{
Luis Polanco }
\author{
Jose A. Perea 
}
\address{Department of Computational Mathematics, Science \& Engineering, Department of Mathematics,
         Michigan State University, East Lansing,
         MI, USA.}
\email{polanco2@msu.edu}
\address{Department of Computational Mathematics, Science \& Engineering, Department of Mathematics,
         Michigan State University, East Lansing,
         MI, USA.}
\email{joperea@math.msu.edu}
\subjclass[2010]{Primary 55R99, 55N99, 68W05; Secondary 55U99}
\keywords{Persistent cohomology, Fiber bundle, Classifying map, Lens space.}
\begin{document}
\bibliographystyle{abbrvnat}
\maketitle

\begin{abstract}
%Motivated by the bijection $\check{H}^1(B;\mathscr{C}_{\Z_q}) \cong  [B, S^\infty/\Z_q]$ between
%1-dim \v{C}ech cohomology with
%coefficients in the sheaf
%of $\Z_q$-valued continuous maps,
%and the set of homotopy classes of maps to the infinite $q$-Lens space,
We introduce here a framework to construct coordinates in \emph{finite} Lens spaces for data with nontrivial  1-dimensional $\Z_q$ persistent cohomology,
for  $q>2$ prime.
Said coordinates  are defined on an open neighborhood of the data, yet
constructed with only a small subset of landmarks. We also introduce a dimensionality reduction scheme in $S^{2n-1}/\Z_q$ (Lens-PCA: $\mathsf{LPCA}$),
and demonstrate the efficacy of the pipeline
$ \Z_q$-persistent cohomology
$\Rightarrow$ $S^{2n-1}/\Z_q$ coordinates
$\Rightarrow$  $\mathsf{LPCA}$,
for  nonlinear (topological) dimensionality reduction.
\end{abstract}

\section{Introduction}
One of the main questions in   Topological Data Analysis (TDA) is how to use topological signatures like persistent (co)homology \cite{perea2018brief} to infer spaces  parametrizing a given
data set
\cite{carlsson_2014, klein_dictionary,Carlsson_natural_images}.
This is relevant in nonlinear dimensionality reduction since the presence
of nontrivial  topology---e.g., loops, voids, non-orientability, torsion, etc---can prevent accurate descriptions with low-dimensional Euclidean coordinates.

Here we seek to address this problem  motivated by two facts.
The first:
If $G$ is a topological abelian group,
then one can associate to  it a contractible space, $EG$, equipped with a free right $G$-action.
For instance, if $G = \Z$, then
$\R$ is a model for $E\Z$, with right $\Z$-action
$\R\times \Z \ni(r,n)\mapsto r + n\in \R$.
The quotient $BG := EG/G$ is
called the classifying space of $G$ \cite{milnor1956construction}. In particular
$B\Z \simeq S^1$, $B\Z_2 \simeq \RP^\infty$, $BS^1 \simeq \CP^\infty$ and $B\Z_q \simeq S^\infty/\Z_q$;
 here $\simeq$ denotes homotopy equivalence.
The second fact:
If $B$ is a topological space and
$\mathscr{C}_G$ is the sheaf  over $B$ (defined in \cite{miranda1995algebraic})
sending  each  $U\subset B$ open to the abelian group of
continuous maps from $U$ to $G$,
then $\check{H}^1(B;\mathscr{C}_G)$---the first \v{C}ech cohomology group of $B$
with coefficients in $\mathscr{C}_G$---is in bijective correspondence with
$[B\, ,\, BG]$---the set of homotopy classes of continuous maps from $B$ to the classifying space $BG$.
%
%
%
%If $G$ is an abelian group and $n\geq 0$ is an integer,
%then there exists a connected CW-complex $K(G,n)$---unique up to homotopy equivalence ($\simeq$)---with homotopy groups $\pi_j(K(G,n)) = 0$
%if $j\neq n$, and $\pi_n(K(G,n)) \cong G$.
%These are called Eilenberg-MacLane spaces \cite{eilenberg1945relations}.
%The second fact: if $B$ is a CW-complex, then $H^n(B;G)$---the $n$-th singular cohomology of $B$ with coefficients in $G$---and $[B,K(G,n)]$---the set of
%homotopy classes of continuous maps from $B$ to $K(G,n)$---are in natural bijective
%correspondence.
%Indeed, there is a distinguished element
%$e \in H^n(K(G,n); G) \cong G$, so that given
%$\eta \in H^n(B;G)$, there exists  $f: B \longrightarrow K(G,n)$
%unique up to homotopy  so that
%$f^*(e)  = \eta$.
This bijection is a manifestation
of the Brown representability theorem
\cite{brown1962cohomology}, and implies,
in so many words, that \v{C}ech cohomology classes can be
represented as coordinates with values in a classifying space (like $S^1$ or $S^\infty/\Z_q$).

For  point cloud data---i.e., for a finite subset $X$ of an ambient metric space $(M,d)$---one does not compute \v{C}ech cohomology, but rather
\emph{persistent cohomology}.
Specifically,
the persistent cohomology of the Rips
filtration on the data set $X$ (or
a subset of landmarks $L$).
The first main result of this paper
contends  that  steps one through three   below
mimic the bijection
$\check{H}^1(B ;\mathscr{C}_{\Z_q}) \cong[B , S^\infty/\Z_q]$
for $B\subset M$ an open neighborhood of $X$:
\begin{enumerate}
    \item Let $(M,d)$ be a  metric space and  let $L\subset  X\subset M$  be finite. $X$ is the data
        and $L$ is a set of landmarks.

    \item For a prime $q>2$ compute $PH^1(\mathcal{R}(L);\Z_q)$;
        the 1-dim $\Z_q$-persistent cohomology of the Rips filtration on $L$. If the corresponding
        persistence diagram
        $\dgm(L)$ has an element $(a,b)$ so that $2a < b$, then let $a \leq \epsilon < b/2$ and choose a  representative cocycle $\eta \in Z^1(R_{2\epsilon}(L); \Z_q)$ whose cohomology class has  $(a,b)$ as birth-death pair.

    \item Let $B_\epsilon(l)$ be the open ball in $M$ of radius $\epsilon$ centered at
    $l\in L = \{l_1,\ldots, l_n\}$, and let $\varphi = \{\varphi_l\}_{l\in L}$
    be a partition of unity subordinated to $\mathcal{B} = \{B_\epsilon(l)\}_{l\in L}$.
    If $\zeta_q \neq 1$ is a $q$-th root of unity,
    then the cocycle $\eta$
    yields a  map
    $f: \bigcup\mathcal{B}\longrightarrow L_q^n$
    to the Lens space $L^n_q = S^{2n-1}/\Z_q$,
    given in homogeneous coordinates   by the formula
    \[
    B_\epsilon(\ell_j) \ni b \;,\;
    f(b) = \left[\sqrt{\varphi_1(b)} \zeta_q^{\eta_{j1}} :\cdots: \sqrt{\varphi_n(b)} \zeta_q^{\eta_{jn}}\right]
    \]
    where $\eta_{jk}\in \Z_q$  is
        the value of the cocycle $\eta$ on the edge
        $\{l_j,l_k\} \in R_{2\epsilon}(L)$.
\end{enumerate}

If  $X\subset \bigcup\mathcal{B}$,
then $f(X) = Y \subset L^n_q$ is the representation of the data---in a potentially high dimensional Lens space---corresponding to the cocycle $\eta$.
The second contribution of this paper
is a dimensionality reduction procedure
in $L^n_q$ akin to Principal Component
Analysis,   called $\mathsf{LPCA}$.
This allows us to produce from $Y$,
a family of point clouds
$P_k(Y) \subset L^k_q$, $1\leq k \leq n$, $P_n(Y) = Y$,
minimizing an appropriate notion of distortion.
These are the Lens coordinates of
 $X$ induced by the cocycle $\eta$.

This work, combined with
\cite{circular_coord, projective_coord},
should be seen as one of the final steps in completing
the program of using the classifying space $BG$, for $G$ abelian and finitely generated, to produce coordinates for data with nontrivial underlying $1^{st}$ cohomology.
Indeed, this follows from the fact
that $B(G\oplus G') \simeq BG \times BG'$,
and that if $G$ is finitely generated
and abelian, then it is ismorphic to
$\Z^n \oplus \Z_{n_1}\oplus \cdots \oplus \Z_{n_r}$
for unique integers $n,n_1,\ldots, n_r \geq 0 $.

\section{Preliminaries}
\subsection{Persistent Cohomology}
A family
$
\mathcal{K} = \{K_{\alpha}\}_{\alpha\in \R}$  of simplicial complexes
 is called a filtration if $K_\alpha \subset K_{\alpha'}$ whenever  $\alpha \leq \alpha'$.
If $\F$ is a field and $i\geq 0$ is an integer, then the direct sum
$PH^i(\mathcal{K}; \F) : = \bigoplus\limits_{\alpha} H^i(K_\alpha ; \F)$
of cohomology groups is called the $i$-th dimensional \textbf{$\F$-persistent cohomology} of $\mathcal{K}$.
A theorem of Crawley-Boevey \cite{crawley2015decomposition}
contends that if  $H^i(K_\alpha;\F)$ is  finite dimensional   for
each $\alpha$, then the isomorphism type of $PH^i(\mathcal{K};\F)$---as a persistence module---is uniquely
determined by a multiset (i.e., a set whose elements may appear with repetitions)
\[
\dgm \subset \{(\alpha, \alpha') \in [-\infty, \infty]^2 : \alpha \leq \alpha' \}\]
called the \textbf{persistence diagram} of $PH^i(\mathcal{K};\F)$.
% In particular, if $(\alpha, \alpha') \in \dgm$ and $\alpha <\epsilon <  \alpha'$, then there exists $[\eta] \in H^i(K_\epsilon ; \F)$, so that for all $\delta, \delta'\geq 0$ with $\alpha < \epsilon - \delta  \leq   \epsilon + \delta' < \alpha'$,
% $[\eta] \in \mathsf{Img}(i^*_{\epsilon, \epsilon+\delta'}) \smallsetminus
% \mathsf{Ker}(i^*_{\epsilon - \delta, \epsilon})$
% for the homomorphisms in cohomology induced by the inclusions
% $
% K_{\epsilon - \delta}
% \xhookrightarrow{i_{\epsilon-\delta,\epsilon}}
% K_{\epsilon}
% \xhookrightarrow{i_{\epsilon ,\epsilon + \delta'}}
% K_{\epsilon + \delta'}$.
Pairs $(\alpha,\alpha')$ with large persistence $\alpha' - \alpha$, are
indicative of stable topological features throughout the filtration $\mathcal{K}$.

Persistent cohomology is used in TDA to quantify the topology underlying
a data set.
There are two widely used filtrations associated to a subset $X$ of a
metric space $(M,d)$, the \textbf{Rips filtration}  $\mathcal{R}(X) = \{R_\alpha(X)\}_{\alpha}$ and
the \textbf{\v{C}ech filtration}
 $\check{\mathcal{C}}(X) = \{\check{C}_\alpha(X)\}_\alpha$.
Specifically,   $R_\alpha(X)$ is the set of nonempty
finite subsets of $X$ with diameter less than $\alpha$, and
$\check{C}_\alpha(X)$ is the nerve of the collection $\mathcal{B}_\alpha$
of open balls  $B_\alpha(x) \subset M$ of radius $\alpha$, centered at $x\in X$.
In other words, $\check{C}_\alpha(X) = \mathcal{N}(\mathcal{B}_\alpha)$.
Generally  $\mathcal{R}(X)$ is more easily computable,
but $\check{\mathcal{C}}(X)$ has better theoretical properties (e.g., the Nerve theorem \cite[4G.3]{hatcher2002algebraic}).
Their relative weaknesses are ameliorated by noticing that
\[R_\alpha(X) \subset \mathcal{N}(\mathcal{B}_\alpha) \subset R_{2\alpha}(X)\]
for all $\alpha$, and using both filtrations in analyses: Rips for computations, and \v{C}ech for theoretical inference.

\subsection{Lens Spaces}

Let $q \in \N$ and let $\zeta_q \in \C$ be a primary $q$-th root of unity. Fix $n\in \N$ and let $q_1,\ldots, q_n \in \N$ be relatively prime to $q$. We define the \textbf{Lens space} $L^n_q(q_1, \ldots, q_n)$ as the quotient of
$S^{2n -1} \subset \C^n$ by the $\Z_q$ right action
\[
 [z_1,\ldots, z_n] \cdot g  : = \left[  z_1\zeta_q^{ q_1g} ,\ldots, z_n\zeta_q^{ q_ng} \right]
\] with simplified notation $L^n_q := L^n_q(1, \ldots, 1 )$. Notice that   when $q=2$
and $q_1 = \cdots = q_n = 1$, then the right action described above is the antipodal map of $S^{2n-1}$, and therefore $L_2^n = \RP^{2n-1}$.
Similarly, the infinite Lens
space $L^\infty_q = L^\infty_q(1,1,\ldots )$ is defined
as the quotient of the infinite unit sphere $S^\infty \subset \C^\infty $,
by the action of $\Z_q$
induced by scalar-vector multiplication
by powers of   $\zeta_q$.
%
%For example $L_5(1,1)$ and $L_5(1,2)$ are not homotopy equivalent even though they have the same fundamental group and homology groups. While for example $L_7(1,1)$ and $L_7(1,2)$ are homotopy equivalent but not homeomorphic.
%This last one can be seen from the classic Reidemeister's theorem.
%
%\begin{theorem}
%Given Lens spaces $L_q (1,q_1)$ and $L_q (1,q_2)$
%\[L_q (1,q_1) \simeq L_q(1,q_2) \text{ if and only if } q_1q_2 \equiv \pm n^2 \mod q\]
%\[L_q (1,q_1) \cong L_q(1,q_2) \text{ if and only if } q_1 \equiv \pm q_2^{\pm 1} \mod q\]
%\end{theorem}
%
%And in \cite{LONGONI2005375} that authors prove that in fact the configuration spaces $F_n(L_7(1,1))$ and $F_n(L_7(1,2))$ are in fact not homotopy equivalent for any $n\geq 2$.

\subsubsection{A Fundamental domain for $L^2_q(1, p)$}\label{subsec:fundamental_domain}
In what follows we describe a convenient model for both  $L^2_q(1, p)$ and a fundamental domain thereof.
This model will allow us to provide visualizations in Lens spaces towards the end of the paper.
Let $D^3$ be the set of points  $\xx \in \R^3$ with $\|\xx\|\leq 1$, and let
$D_+$ ($D_{-}$) be the upper (lower) hemisphere of $\partial D^3$, including the equator.
Let $r_{p/q} : D_{+} \longrightarrow D_{+}$
 be counterclockwise rotation by $2\pi p/q$ radians  around the $z$-axis,
 and let $\rho : D_+ \longrightarrow D_{-}$
 be the reflection $\rho(x,y,z) = (x,y,-z)$.
 Then,
 $L^2_q(1,p) $ is homeomorphic to $D^3/\sim$,
 where $\xx \sim \yy$ if and only if
 $\xx \in D_{+}$  and $\yy =  \rho\circ r_{p/q}(\xx)$.

\subsection{Principal Bundles}
Let $B$ be a  topological  space with base point $b_0 \in B$.
One of the most transparent methods for producing an explicit bijection
between
$\check{H}^1(B;\mathscr{C}_{\Z_q}) $ and $ [B,L^\infty_q]$ is via the
theory of Principal bundles. We present a terse introduction here,
but direct the interested reader to \cite{husemoller1994fibre} for  details.
A continuous map $ \pi :P \longrightarrow B$ is said to be a \textbf{fiber bundle} with fiber $F = \pi^{-1}(b_0)$ and total space $P$, if $\pi$ is surjective, and
every $b\in B$ has an open neighborhood $U \subset B$ as well as a homeomorphism $\rho_U : U\times F \longrightarrow \pi^{-1}(U)$,
    so that $\pi \circ \rho_U(x,e) = x$ for every $(x,e) \in U \times F$.

Let $(G,+)$ be an abelian topological group.
A fiber bundle $\pi : P \longrightarrow B$ is said to be a \textbf{principal $G$-bundle} over $B$,
if $P$ comes equipped with a  free right $G$-action $ P\times G  \ni (e,g)\mapsto e\cdot g\in P$
which is transitive in $\pi^{-1}(b)$ for every $b\in B$.
Moreover, two principal
$G$-bundles
$\pi: P \longrightarrow B$
and $\pi' : P' \longrightarrow B$
are isomorphic, if there exits
a  homeomorphism
$\Phi: P \longrightarrow P'$,
with $\pi' \circ \Phi = \pi$
    and so that $\Phi(e\cdot g) = \Phi(e)\cdot g$ for all
$(e,g)\in P\times G$.
Given an open cover $\mathcal{U}= \{U_j\}_{j\in J}$ of $B$,
a \textbf{\v{C}ech cocycle}
\[
 \eta = \{\eta_{jk}\}  \in \check{Z}^1(\mathcal{U}; \mathscr{C}_G )
\]
is a collection of continuous maps $\eta_{jk} : U_j \cap U_k \longrightarrow G$
so that $\eta_{jk}(b) + \eta_{kl}(b) = \eta_{jl}(b)$ for every $b\in U_j \cap U_k \cap U_l$.
Given such a cocycle,
one can construct
 a principal $G$-bundle over $B$ with total space
\[
P_\eta = \left(\bigcup_{j\in J} U_j \times \{j\} \times G \right) / \sim
\]
where $(b,j,g) \sim (b, k , g  + \eta_{jk}(b))$ for every $b\in U_j \cap U_k$,
and  $\pi: P_\eta \longrightarrow B$ sends the class of $(b,j,g)$ to $b\in B$.

\begin{theorem}\label{bijection}
If $\mathsf{Prin}_G(B)$ denotes
the set of isomorphism classes of
principal $G$-bundles over $B$, then
\[
\begin{array}{ccl}
\check{H}^1(B;\mathscr{C}_G) & \longrightarrow & \mathsf{Prin}_G(B) \\
\left[\eta\right] &\mapsto & [P_\eta]
\end{array}
\]
is a bijection.
\end{theorem}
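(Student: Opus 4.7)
The plan is to verify three properties of the assignment $[\eta] \mapsto [P_\eta]$: that it is well-defined on cohomology classes, surjective onto isomorphism classes of principal bundles, and injective. For well-definedness, I would fix a cover $\mathcal{U} = \{U_j\}$ on which representatives $\eta, \eta'$ of the same class differ by a coboundary $\eta_{jk} - \eta'_{jk} = \beta_j - \beta_k$, with continuous $\beta_j : U_j \to G$. Then the explicit formula $\Phi\bigl([(b,j,g)]\bigr) := [(b,j, g + \beta_j(b))]$ defines a map $P_\eta \to P_{\eta'}$; it is $G$-equivariant and commutes with $\pi$ by construction, and the verification that it respects the identifications $(b,j,g)\sim(b,k,g+\eta_{jk}(b))$ is exactly the coboundary relation, where commutativity of $G$ is used.

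For surjectivity, given a principal $G$-bundle $\pi : P \to B$, I would pick local trivializations $\rho_j : U_j \times G \to \pi^{-1}(U_j)$ on an open cover. On each overlap the map $\rho_k^{-1}\circ \rho_j : (U_j \cap U_k) \times G \to (U_j \cap U_k) \times G$ preserves fibers and is $G$-equivariant, and since the $G$-action is free and transitive on fibers, it must have the form $(b,g)\mapsto (b, g + \eta_{jk}(b))$ for a unique continuous $\eta_{jk} : U_j \cap U_k \to G$. The identity $(\rho_l^{-1}\rho_k)\circ (\rho_k^{-1}\rho_j) = \rho_l^{-1}\rho_j$ on triple overlaps translates verbatim into the cocycle condition, giving $\eta \in \check{Z}^1(\mathcal{U}; \mathscr{C}_G)$. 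A direct gluing argument then identifies $P$ with $P_\eta$.

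For injectivity, suppose $\Phi : P_\eta \to P_{\eta'}$ is an isomorphism of principal $G$-bundles; after passing to a common refinement of the two cocycles' covers (which does not change the cohomology class or the isomorphism class of the bundle), $\Phi$ pulls back the canonical trivialization of $P_{\eta'}$ on $U_j$ to a trivialization of $P_\eta$ on $U_j$, and by the same free-and-transitive argument as above the two trivializations differ by right translation by a continuous $\beta_j : U_j \to G$. Writing out the compatibility of $\Phi$ with the identifications defining $P_\eta$ and $P_{\eta'}$ then forces $\eta - \eta' = \delta\beta$, so $[\eta] = [\eta']$.

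The main obstacle, in my view, is bookkeeping for \v{C}ech cohomology as a colimit over refinements: representatives on different covers must be compared on a common refinement, and one has to check that both the cocycle assignment (surjectivity) and the coboundary extraction (injectivity) are compatible with restriction to finer covers, so that the induced map on the colimit is well-defined and a bijection. A secondary technical point, used repeatedly, is the abelianness of $G$, which is what allows the transition maps to be encoded by translations $g \mapsto g + \eta_{jk}(b)$ rather than by a noncommutative action, and is what makes the coboundary relation additive; outside this setting the analogous classification requires non-abelian \v{C}ech cocycles and a more delicate formulation.
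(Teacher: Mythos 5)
Your argument is correct and complete in outline: well-definedness via the explicit map $[(b,j,g)]\mapsto[(b,j,g+\beta_j(b))]$, surjectivity via the transition functions of local trivializations (using freeness and transitivity of the fiberwise action, and abelianness to write them additively), injectivity by extracting a coboundary from a bundle isomorphism, together with the necessary bookkeeping over cover refinements. The paper does not prove this theorem itself --- it only cites Sections 2.4 and 2.5 of \cite{circular_coord} --- and the argument carried out there is precisely this standard one, so your approach matches the intended proof.
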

\begin{proof}
See 2.4 and 2.5 in \cite{circular_coord}
\end{proof}

Now, let us see describe the relation between principal
$G$-bundles over $B$, and maps from $B$ to the classifying space $BG$.
Indeed,
let $
\jmath : EG \longrightarrow BG = EG/G$
be the quotient map.
Given   $h: B \longrightarrow BG$ continuous, the pullback
$h^*EG$ is   the principal $G$-bundle over $B$
with total space $\{(b,e) \in B\times EG : h(b) = \jmath(e)\}$,
and  projection map  $(b,e)\mapsto b$.
Moreover,
\begin{theorem}\label{thm:IsoHomoPrinG}
Let $[B,BG]$ denote the set of homotopy class of maps from $B$ to the classifying space $BG$.
Then, the function
\[
\begin{array}{ccl}
  [B, BG] & \longrightarrow &  \mathsf{Prin}_G(B)\\
 \; [ h] & \mapsto & [h^*EG]
\end{array}
\]
is a  bijection.
\end{theorem}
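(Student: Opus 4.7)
The plan is to prove both directions of the bijection by exploiting contractibility of $EG$ together with the universality of the principal $G$-bundle $\jmath: EG \to BG$.

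First I would verify well-definedness. Given continuous maps $h_0, h_1: B \to BG$ and a homotopy $H: B \times I \to BG$ between them, the pullback $H^*EG$ is a principal $G$-bundle over $B \times I$ whose restrictions over $B \times \{0\}$ and $B \times \{1\}$ are $h_0^*EG$ and $h_1^*EG$. By the covering homotopy property for principal $G$-bundles (over paracompact bases, as in Husemoller), these restrictions are isomorphic as principal $G$-bundles, so $[h] \mapsto [h^*EG]$ descends to $[B,BG]$.

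For surjectivity, fix a principal $G$-bundle $\pi: P \to B$. The aim is to build a $G$-equivariant map $\tilde{h}: P \to EG$; then $h := \jmath \circ \tilde{h}$ is well-defined on $B$ (since $\tilde{h}$ is $G$-equivariant and $\jmath$ is the orbit map), and $e \mapsto (\pi(e), \tilde{h}(e))$ yields an explicit isomorphism $P \cong h^*EG$. To construct $\tilde{h}$ I would use Milnor's infinite join model for $EG$, choose a locally finite trivializing cover $\{U_j\}$ of $B$ with a subordinate partition of unity $\{\psi_j\}$, and define $\tilde{h}$ by combining the local trivializations $\rho_{U_j}^{-1}$ with the functions $\psi_j \circ \pi$ as join coordinates. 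The resulting formula is continuous and $G$-equivariant by construction.

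For injectivity, suppose $\Phi: h_0^*EG \to h_1^*EG$ is an isomorphism of principal $G$-bundles. Writing $P = h_0^*EG$, composing $\Phi$ with the canonical projections yields two $G$-equivariant maps $\tilde{h}_0, \tilde{h}_1: P \to EG$ lifting $h_0 \circ \pi$ and $h_1 \circ \pi$. Because $EG$ is contractible, the same partition-of-unity/join construction applied to $P \times I$, relative to the ends $P \times \{0\}$ and $P \times \{1\}$, produces a $G$-equivariant homotopy $\widetilde{H}: P \times I \to EG$ between them; quotienting by $G$ gives a homotopy $H: B \times I \to BG$ between $h_0$ and $h_1$.

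The main obstacle is Step 2, where one must produce a global $G$-equivariant lift $\tilde{h}: P \to EG$ from purely local data. Everything else reduces mechanically to this construction (the injectivity argument is its relative version on $P \times I$). The clean way to handle it is to commit to Milnor's join model of $EG$, so that partition-of-unity gluing is built into the definition; the remaining work is verifying local finiteness of the trivializing cover on a paracompact $B$, continuity of the join formula, and $G$-equivariance, all of which are standard but need to be carried out carefully.
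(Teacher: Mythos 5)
Your outline is correct and is essentially the same argument the paper relies on: the paper offers no proof of its own, deferring entirely to Husemoller (Chapter~4, Theorems~12.2 and 12.4), and those theorems are established there by precisely the route you describe --- the covering homotopy theorem for well-definedness, a partition-of-unity-weighted join-coordinate map into Milnor's model of $EG$ for surjectivity, and the relative version of that construction over $B \times I$ for injectivity. The only hypothesis worth flagging explicitly is numerability/paracompactness of $B$, which you correctly invoke and which holds in the paper's setting since $B$ is an open subset of a metric space.
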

\begin{proof}
See \cite{husemoller1994fibre}, Chapter 4: Theorems 12.2 and 12.4.
\end{proof}

In summary,
given a principal $G$-bundle $\pi : P \longrightarrow B$, or its corresponding \v{C}ech cocycle $\eta$,
there exists a continuous map $h: B \longrightarrow BG $ so
that $h^*EG $ is isomorphic to $(\pi,P)$, and  the choice of $h$ is unique up to homotopy.
Any such choice is called a classifying map for   $\pi: P \longrightarrow B$.

\section{Main Theorem: Explicit Classifying Maps for $L^\infty_q$} \label{sec:classifying_map}
The goal of this section is to show how one can go from a singular cocycle
$\eta \in Z^1(\mathcal{N}(\mathcal{U}); \Z_q)$ to an explicit map
$f: \bigcup \mathcal{U} \longrightarrow L^\infty_q$. All proofs are included in the Apendix.
Let
$J = \{1,\ldots, n\}$,
let $\mathcal{U} = \{U_j\}_{j\in J}$ be  an
open cover for   $B$,
and let
$\{\varphi_j\}_{j\in J}$ be a partition of unity dominated by $\mathcal{U}$.
If  $\eta = Z^1(\mathcal{N}(\mathcal{U});\Z_q)$  and $\zeta_q$ is a primitive $q$-th root of unity,
  let
$f_j : U_j \times \{j\} \times \Z_q \longrightarrow  S^{2n-1} \subset \C^n$
be
\[
f_j (b,j,g) = \left[\sqrt{\varphi_1(b)}\zeta_q^{ (g +  \eta_{j1})}, \ldots, \sqrt{\varphi_n(b)}\zeta_q^{( g + \eta_{jn})}\right]
\]

If $b\in U_j \cap U_k$, then $f_j (b, j, g) = f_k (b , k, g + \eta_{jk})$
and  we get an induced map $\Phi: P_\eta \longrightarrow S^{2n-1}\subset S^\infty$
 taking the class
of $(b,j,g)$ in the quotient $P_\eta$ to $f_j(b,j,g)$.

\begin{prop}\label{prop:F_equivariants}
$\Phi$ is well defined and $\Z_q$-equivariant.
\end{prop}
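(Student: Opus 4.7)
The plan is to verify two assertions: (i) the collection $\{f_j\}$ is compatible with the equivalence relation defining $P_\eta$, so that $\Phi$ descends to a well-defined map on the quotient; and (ii) $\Phi$ intertwines the right $\Z_q$-action on $P_\eta$ (namely $[(b,j,g)]\cdot h = [(b,j,g+h)]$) with the $\Z_q$-action on $S^{2n-1}$ by scalar multiplication by powers of $\zeta_q$. Before either step I would remark that $f_j(b,j,g)$ truly lies in $S^{2n-1}$: since $\{\varphi_l\}_{l\in J}$ is a partition of unity subordinate to $\mathcal{U}$ and each factor $\zeta_q^{g+\eta_{jl}}$ has unit modulus, the squared Euclidean norm of $f_j(b,j,g)$ equals $\sum_l \varphi_l(b)=1$. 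A small notational point is also worth a sentence: whenever $\varphi_l(b)>0$ one has $b\in U_l\cap U_j$, so $\eta_{jl}$ is defined on the support of $\sqrt{\varphi_l}$, and the terms with $\varphi_l(b)=0$ vanish regardless of how one extends $\eta_{jl}$.

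For well-definedness, I would unpack the two sides of the required identity $f_j(b,j,g)=f_k(b,k,g+\eta_{jk})$ for $b\in U_j\cap U_k$ coordinate by coordinate. The $l$-th coordinate on the left is $\sqrt{\varphi_l(b)}\zeta_q^{g+\eta_{jl}}$, while on the right it is $\sqrt{\varphi_l(b)}\zeta_q^{g+\eta_{jk}+\eta_{kl}}$. The cocycle condition $\eta_{jk}+\eta_{kl}=\eta_{jl}$ in $\Z_q$ makes the two exponents agree, so the coordinates match. This identity, applied at every representative of a $\sim$-class, shows that $\Phi$ is unambiguously defined by the formula $\Phi([(b,j,g)])=f_j(b,j,g)$. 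This is the only place the cocycle hypothesis enters.

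For equivariance, the action of $h\in\Z_q$ on $P_\eta$ is well defined because $(b,j,g)\sim(b,k,g+\eta_{jk})$ implies $(b,j,g+h)\sim(b,k,g+h+\eta_{jk})$. A direct computation then gives
\[
\Phi\bigl([(b,j,g)]\cdot h\bigr)=f_j(b,j,g+h)=\bigl[\sqrt{\varphi_1(b)}\zeta_q^{g+h+\eta_{j1}},\ldots,\sqrt{\varphi_n(b)}\zeta_q^{g+h+\eta_{jn}}\bigr],
\]
and factoring the common scalar $\zeta_q^h$ out of every coordinate yields $\zeta_q^h\cdot f_j(b,j,g)=\Phi([(b,j,g)])\cdot h$.

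There is no serious obstacle here: the whole argument reduces to two short coordinate-wise calculations, one exploiting the cocycle identity and the other the additive nature of the $\Z_q$-action on exponents. The only delicate aspect is notational—making sure that the factors involving $\eta_{jl}$ are unambiguous exactly where they matter—which I would dispense with in a single sentence as above.
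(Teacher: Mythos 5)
Your proof is correct and follows essentially the same route as the paper's: reduce well-definedness to the identity $f_j(b,j,g)=f_k(b,k,g+\eta_{jk})$ and check equivariance by a direct computation on exponents. The only difference is that you actually verify that identity via the cocycle condition $\eta_{jk}+\eta_{kl}=\eta_{jl}$ (and note that the image lands in $S^{2n-1}$), whereas the paper asserts the identity just before the proposition and cites it without proof — a welcome bit of extra completeness, not a different argument.
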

\begin{proof}
Take $[b,j,g] \in P_\eta$ and consider a different representative of the class. Namely, an element $(b,k,g+\eta_{jk})$ such that $b\in U_j\cap U_k$. By definition of $\Phi$, we have $\Phi([b,j,g]) = f_j(b,j,g)$ and $\Phi([b,k,g+\eta_{jk}]) = f_k(b,k,g+\eta_{jk})$. And since $f_j (b, j, g) = f_k (b , k, g + \eta_{jk})$, we have that \[ \Phi([b,j,g]) = \Phi([b,k,g+\eta_{jk}]), \] which shows that $\Phi$ is well defined.

To see that $\Phi$ is $\Z_q$-equivariant, take $m\in\Z_q$ for any $m=0,\dots,q-1$ and compute
\begin{align*}
    \Phi & ([b,j,g]) \cdot m \\
    &= \left[\sqrt{\varphi_1(b)}\zeta_q^{ (g + m +  \eta_{j1})}, \ldots, \sqrt{\varphi_n(b)}\zeta_q^{( g + m + \eta_{jn})}\right] \\
    &= f_j(b, j, g + m) = \Phi([b, j, g + m]) \\
    &= \Phi([b, j, g]\cdot m).
\end{align*}
\end{proof}

Let $p:S^{2n-1}\longrightarrow L_q^n$ be the quiotient map.
Since $\Phi:P_\eta\longrightarrow S^{2n-1}\subset S^\infty$ is $\Z_q$-equivariant, it induces a map $f: B \longrightarrow L_q^n \subset L_q^\infty$ such that $p\circ \Phi = f \circ \pi$.
By construction of $\pi: P_\eta\longrightarrow B$, $f(\pi([b,j,g])) = f(b)$ for any $g\in \Z_q$. In particular for $0\in\Z_q$ \begin{equation}\label{eqn:f}
  U_j \ni b \;\; , \;\;   f(b) = \left[\sqrt{\varphi_1(b)}\zeta_q^{\eta_{j1}} : \cdots : \sqrt{\varphi_n(b)}\zeta_q^{\eta_{jn}}\right]
\end{equation}

\begin{remark}
The notation $ \left[a_1 : \cdots : a_n\right] $ corresponds to  homogeneous coordinates in $S^{2n-1}/\Z_q$.
In other words,  $\left[a_1 : \cdots : a_n\right] = \{[a_1\cdot \alpha,\dots,  a_n\cdot \alpha] \in S^{2n-1} : \alpha\in\Z_q \}$.
\end{remark}

\begin{theorem}\label{thm:ClassifyingMap}
The map $f$ classifies the $\Z_q$-principal  bundle $P_\eta$ associated to the cocycle
$\eta \in Z^1(\mathcal{N}(\mathcal{U});\Z_q)$.
\end{theorem}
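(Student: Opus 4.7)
The plan is to use Theorem \ref{thm:IsoHomoPrinG} (the Husemoller bijection between $[B,BG]$ and $\mathsf{Prin}_G(B)$) specialised to $G = \Z_q$. Recall that $S^\infty$ is contractible and carries a free $\Z_q$-action whose quotient is $L^\infty_q$, so $p : S^\infty \longrightarrow L^\infty_q$ is a model for the universal bundle $E\Z_q \longrightarrow B\Z_q$. Therefore, to show that $f : B \longrightarrow L^\infty_q$ classifies $P_\eta$, it suffices to produce an isomorphism of principal $\Z_q$-bundles
\[
 \widetilde{\Phi} : P_\eta \;\longrightarrow\; f^{*}S^{\infty} \;=\; \bigl\{(b,e) \in B \times S^\infty \,:\, f(b) = p(e)\bigr\}.
\]

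First, I would define $\widetilde{\Phi}$ by $\widetilde{\Phi}(x) = \bigl(\pi(x),\,\Phi(x)\bigr)$, where $\pi: P_\eta \longrightarrow B$ is the structure map of the bundle $P_\eta$ and $\Phi: P_\eta \longrightarrow S^{2n-1}\subset S^\infty$ is the map constructed just before Proposition \ref{prop:F_equivariants}. The relation $p\circ \Phi = f\circ \pi$, established in the paragraph preceding equation \eqref{eqn:f}, guarantees that $\widetilde{\Phi}(x)$ actually lies in $f^{*}S^\infty$. Continuity of $\widetilde\Phi$ follows from continuity of $\pi$ and $\Phi$ together with the universal property of the pullback.

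Next, I would check that $\widetilde{\Phi}$ respects all the principal-bundle structure. By definition it covers the identity on $B$, since the projection $f^{*}S^\infty \longrightarrow B$ is $(b,e)\mapsto b$. The $\Z_q$-action on $f^{*}S^\infty$ is inherited from the action on $S^\infty$ through the second coordinate, so $\Z_q$-equivariance of $\widetilde\Phi$ is equivalent to the $\Z_q$-equivariance of $\Phi$, which was established in Proposition \ref{prop:F_equivariants}.

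Finally, I would invoke the standard fact that any $G$-equivariant map between principal $G$-bundles that covers the identity on the base is automatically an isomorphism. The argument is short: on each fibre $\pi^{-1}(b)$ the $\Z_q$-action is free and transitive on both sides, so any $\Z_q$-equivariant map between fibres is a bijection; combining this with local triviality of both bundles gives that $\widetilde{\Phi}$ is a fibrewise homeomorphism, hence a global homeomorphism. The hardest part, conceptually, is simply keeping straight the identifications $S^\infty = E\Z_q$ and $L^\infty_q = B\Z_q$ and verifying the commutativity $p\circ \Phi = f\circ \pi$; once this square is in place, the isomorphism $P_\eta \cong f^{*}E\Z_q$ is automatic and Theorem \ref{thm:IsoHomoPrinG} concludes that $[f] \in [B,L^\infty_q]$ is the classifying homotopy class of $P_\eta$.
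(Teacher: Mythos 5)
Your proposal is correct and follows essentially the same route as the paper: both construct the bundle morphism $(\Phi,f)$ from $P_\eta$ to the universal bundle $S^{2n-1}\to L^n_q$ using $p\circ\Phi = f\circ\pi$ and the equivariance from Proposition \ref{prop:F_equivariants}, and then conclude $P_\eta \cong f^*S^{2n-1}$. The only difference is that the paper cites Husemoller (Chapter 4, Theorem 4.2) for the final step, whereas you spell out the standard argument that an equivariant map of principal bundles covering the identity is an isomorphism.
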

\begin{proof}
First we need to see that $f$ is well defined. Let $b\in U_j\cap U_k$, therefore \begin{align*}
    p(\Phi([b,j,0])) &= \left[\sqrt{\varphi_1(b)}\zeta_q^{\eta_{j1}} : \cdots : \sqrt{\varphi_n(b)}\zeta_q^{\eta_{jn}}\right] \\
    %&= \left[\zeta_q^{\eta_{kj}} \sqrt{\varphi_1(b)}\zeta_q^{\eta_{j1}} : \cdots : \zeta_q^{\eta_{kj}}\sqrt{\varphi_n(b)}\zeta_q^{\eta_{jn}}\right] \\
    %&= \left[\sqrt{\varphi_1(b)}\zeta_q^{\eta_{kj} + \eta_{j1}} : \cdots : \sqrt{\varphi_n(b)}\zeta_q^{\eta_{kj} + \eta_{jn}}\right] \\
    %&= \pi(f_j(b,j,0 + \eta_{kj})) = \pi(f_k(b,k,0)) \\
    &= p(\Phi([b,k,0)).
\end{align*} This shows that $f(b)$ is independent of the open set containing $b$.

Hence $(\Phi,f):(P_\eta,\pi,B) \rightarrow (S^{2n-1},\pi, L^n_q)$ is a morphism of principal $\Z_q$-bundles, and  by   [\cite{husemoller1994fibre}, Chapter 4: Theorem 4.2] we conclude that $P_\eta$ and $f^*(S^{2n-1})$ are isomorphic principal $\Z_q$-bundles over $B$.
\end{proof}

\section{Lens coordinates for data}

Let $(M,d)$ be a metric space and let $L\subset M$ be a finite subset. We will use the following notation from now on:
$B_\epsilon(l) = \{y\in M : d(y,l)<\epsilon\}$,
$\mathcal{B}_\epsilon  = \{B_\epsilon(l)\}_{l\in L}$,
and
$ L^\epsilon = \bigcup \mathcal{B}_\epsilon$.
Given a data set
$X\subset M$,
our goal will be to
choose $L\subset X$, a suitable $\epsilon$ such that $X\subset L^\epsilon$,
and a cocycle $\eta \in Z^1(\mathcal{N}(\mathcal{B}_\epsilon);\Z_q)$.
\Cref{eqn:f}
yields a map $f:L^\epsilon \rightarrow L^\infty_q$ defined for every point in $X$, but constructed from a much smaller subset of landmarks.
Next we describe the details of this construction.

\subsection{Landmark selection}\label{subsec:landmark}
We select the landmark set $L\subset X$
either at random or through \verb"maxmin " sampling. The latter   proceeds inductively as follows:
Fix $n \leq |X|$, and let $l_1\in X$ be chosen at random. Given $l_1, \dots, l_j \in X$ for $j<n$, we  let  $ l_{j+1} = \argmax\limits_{x\in X} \min \{d(x,l_1), \dots, d(x,l_j)\}$.

\subsection{A Partition of Unity subordinated to $\mathcal{B}_\epsilon$}

Defining $f$ requires a partition of unity subordinated to $\mathcal{B}_\epsilon$. Since $\mathcal{B}_\epsilon$ is an open cover composed of metric balls, then we can provide an explicit construction. Indeed, for $r\in \R$ let $\ppart{r} := \max\{r,0\}$, then
% \begin{equation}\label{eqn:partition_unity}
%     \varphi_l(x) :=
%      \max \{\epsilon - d(x,l), 0\} \Big/ \sum_{l'\in L} \max \{\epsilon - d(x,l'), 0\}
% \end{equation}

\begin{equation}\label{eqn:partition_unity}
    \varphi_l(x) := \ppart{\epsilon - d(x,l)} \Big/ \sum_{l'\in L} \ppart{\epsilon - d(x,l')}
\end{equation}
is a partition of unity subordinated to
$\mathcal{B}_\epsilon$.
\subsection{From Rips to \v{C}ech to Rips}\label{subsec:cech-rips}
As we alluded to in the introduction,
a persistent cohomology calculation is an appropriate vehicle to select a scale
$\epsilon$ and a candidate cocycle $\eta$.
That said, determining $\eta\in Z^1(\mathcal{N}(\mathcal{B}_\epsilon), \Z_q)$ would require computing $\mathcal{N}(\mathcal{B}_\epsilon)$ for all $\epsilon$, which in general is an expensive procedure.
Instead we will use the homomorphisms
\[\xymatrix{
    H^1(\mathcal{R}_{2\epsilon}(L)) \ar[r]^{i^*} \ar@/_1pc/[rr]_{\iota} & H^1(\mathcal{N}(\mathcal{B}_\epsilon)) \ar[r]  & H^1(\mathcal{R}_{\epsilon}(L))
}\]
induced by the appropriate inclusions.
Indeed, let $\tilde{\eta} \in Z^1(\mathcal{R}_{2\epsilon}(L); \Z_q)$ be such that $[\tilde{\eta}]\not\in \ker(\iota)$. 
This is where we use the persistent cohomology of $\mathcal{R}(L)$.
Since the previous diagram commutes, then $[\tilde{\eta}]\not\in \ker(i^*)$, so $i^*([\tilde{\eta}])\neq 0$ in $H^1(\mathcal{N}(\mathcal{B}_\epsilon); \Z_q)$. We will let  $[\eta] = i^*([\tilde{\eta}])$  be the class that we use in \Cref{thm:ClassifyingMap}. 
However,

\begin{prop}\label{prop:rips-cech}
If $b\in \mathcal{B}_\epsilon(l_j)$ and   $1 \leq k \leq  n$, then \[
\sqrt{\varphi_k(b)}\zeta_q^{\eta_{jk}} = \sqrt{\varphi_k(b)}\zeta_q^{\tilde{\eta}_{jk}}.
\] That is, we can compute Lens coordinates using only the Rips filtration on the landmark set.
\end{prop}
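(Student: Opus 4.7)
The plan is to argue that once we pick the right representative for $[\eta] = i^*[\tilde{\eta}]$, the equality becomes either trivial (when $\varphi_k(b) = 0$) or tautological (when $\varphi_k(b) > 0$). The work is really in unpacking what $i^*$ does on cocycles.

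First I would identify the map $i$ as the simplicial inclusion $\mathcal{N}(\mathcal{B}_\epsilon) \hookrightarrow \mathcal{R}_{2\epsilon}(L)$. This is well-defined at the simplicial level: if $\{l_{j_0}, \dots, l_{j_s}\}$ spans a simplex of $\mathcal{N}(\mathcal{B}_\epsilon)$, then $B_\epsilon(l_{j_0}) \cap \cdots \cap B_\epsilon(l_{j_s}) \neq \emptyset$, so in particular each pair of centers is within distance $2\epsilon$, which means the simplex lies in $\mathcal{R}_{2\epsilon}(L)$. With $i$ in hand, the induced map $i^*$ on $1$-cochains is just pullback along $i$, so I would select as the distinguished representative of $i^*[\tilde{\eta}]$ the cocycle
\[
\eta := i^\# \tilde{\eta} \in Z^1(\mathcal{N}(\mathcal{B}_\epsilon); \Z_q),
\]
whose value on an edge $\{l_j, l_k\}$ of $\mathcal{N}(\mathcal{B}_\epsilon)$ equals the value of $\tilde{\eta}$ on that same edge viewed in $\mathcal{R}_{2\epsilon}(L)$. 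With this choice, $\eta_{jk} = \tilde{\eta}_{jk}$ holds by definition whenever $\{l_j, l_k\}$ is a $1$-simplex of $\mathcal{N}(\mathcal{B}_\epsilon)$.

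Next I would do the case split. If $\varphi_k(b) = 0$, then $\sqrt{\varphi_k(b)} = 0$ and both sides of the claimed equality are zero, so there is nothing to prove (in particular we do not even need $\eta_{jk}$ to be defined). If $\varphi_k(b) > 0$, then by the explicit formula \eqref{eqn:partition_unity} we must have $\epsilon - d(b, l_k) > 0$, i.e.\ $b \in B_\epsilon(l_k)$. Combined with the hypothesis $b \in B_\epsilon(l_j)$, this gives $b \in B_\epsilon(l_j) \cap B_\epsilon(l_k) \neq \emptyset$, so $\{l_j, l_k\}$ is an edge of $\mathcal{N}(\mathcal{B}_\epsilon)$. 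By the previous paragraph $\eta_{jk} = \tilde{\eta}_{jk}$, and so $\sqrt{\varphi_k(b)}\zeta_q^{\eta_{jk}} = \sqrt{\varphi_k(b)}\zeta_q^{\tilde{\eta}_{jk}}$, as required.

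There is no real obstacle here: the content of the proposition is that the triangle inequality $d(l_j, l_k) \leq d(l_j, b) + d(b, l_k) < 2\epsilon$ automatically sends relevant nerve edges into Rips edges at scale $2\epsilon$, so the Rips cocycle $\tilde{\eta}$ already carries all the information needed to evaluate the classifying map $f$ from \Cref{thm:ClassifyingMap}. The only thing one has to be careful about is that the right choice of cohomology-class representative for $i^*[\tilde{\eta}]$ is literally the pullback cocycle; any other representative would differ by a coboundary and the statement as written would fail on the nose. This is why the proposition is phrased as a pointwise equality of the summands $\sqrt{\varphi_k(b)}\zeta_q^{\eta_{jk}}$ rather than of the full homogeneous coordinates.
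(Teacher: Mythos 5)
Your proposal is correct and follows essentially the same route as the paper's proof: the same case split on whether $\varphi_k(b)$ vanishes, the same observation that $b \in B_\epsilon(l_j)\cap B_\epsilon(l_k)$ forces $\{l_j,l_k\}$ to be an edge of $\mathcal{N}(\mathcal{B}_\epsilon) \subset \mathcal{R}_{2\epsilon}(L)$, and the same conclusion that $\eta_{jk} = \tilde{\eta}_{jk}$ by definition of the pullback. Your explicit remark that $\eta$ must be taken to be the cochain-level pullback $i^\#\tilde{\eta}$ (not an arbitrary representative of $i^*[\tilde{\eta}]$) is a useful clarification that the paper leaves implicit.
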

\begin{proof}
First of all, $\mathcal{R}_{2\epsilon}(L)^{(0)} = \mathcal{N}(\mathcal{B}_\epsilon)^{(0)} = L$. If $b\not\in B_\epsilon(l_k)$, then $\varphi_k(b) = 0$ and therefore the equality holds. If on the other hand $b\in B_\epsilon(l_k) \cap B_\epsilon(l_j)$, then $ \{ j,k \} \in \mathcal{N}(\mathcal{B}_\epsilon)^{(1)} \subset \mathcal{R}_{2\epsilon}(L)^{(1)}$. In which case, by definition of $i^*$, we have $\tilde{\eta}_{jk} = \eta_{jk}$.
\end{proof}

\section{Dimensionality Reduction in $L^n_q$ via Principal Lens Components}

\Cref{eqn:f} gives an explicit formula for the classifying map $f:B\longrightarrow L_q^n$. By construction, the dimension of $L_q^n$ depends on the number $n$ of landmarks selected, which in general can be  large.
The main goal of this section is  to construct a dimensionality reduction procedure in $L_q^n$ to address this shortcoming.
To this end, we define the distance $d_L :  L_q^n \times  L_q^n\longrightarrow [0,\infty)$ as
\begin{equation}\label{eqn:hausdorff_distance}
    d_L([x],[y]) := d_H(x\cdot\Z_q\,,\, y\cdot\Z_q)
\end{equation} where $d_H$ id the Hausdorff distance for subsets of $S^{2n-1}$.

%It is important to emphasize that we are thinking of $g\in\Z_q$ as a complex number by thinking of $\Z_q\subset S^1\subset \C$.
%We will abuse the notation of the group action $gx$ by taking it as a complex multiplication as well as a matrix product in $R^{2n}$.
%In other words, for $[x]\in L_q^n$ and $g\in \Z_q$ we will write $gx$ to represent $g (x_1, \dots, x_1) = (gx_1, \dots, gx_1) \in \C^n$ as well as $(gx_1, \dots, gx_1) \in \R^{2n}$ where \[gx_j = \begin{bmatrix}
%\cos(\arg(g)) & \sin(\arg(g)) \\
%-\sin(\arg(g)) & \cos(\arg(g))
%\end{bmatrix} \begin{bmatrix}
%\Re(x_j) \\
%\Im(x_j)
%\end{bmatrix}\]

\begin{prop}\label{prop:hausdorff_distance} Let $[x], [y] \in L_q^n$, then
\[d_L([x],[y]) = d(x, y\cdot \Z_q) = \min_{g\in\Z_q} d(x, y \cdot g).\]
\end{prop}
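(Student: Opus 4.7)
The plan is to exploit two facts: the $\Z_q$-action on $S^{2n-1}\subset \C^n$ is by isometries (multiplication by a $q$-th root of unity is a unitary transformation of $\C^n$, which preserves the induced metric on the sphere), and each orbit $x\cdot \Z_q$ is a finite set of $q$ points, so the suprema appearing in the Hausdorff distance become minima over finitely many terms.

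The first step is to observe that for any $h\in \Z_q$,
\[
d(x\cdot h,\, y\cdot \Z_q) \;=\; \min_{g\in \Z_q} d(x\cdot h,\, y\cdot g) \;=\; \min_{g\in \Z_q} d(x,\, y\cdot g h^{-1}) \;=\; d(x,\, y\cdot \Z_q),
\]
since $g\mapsto gh^{-1}$ is a bijection of $\Z_q$. Therefore the function $a\mapsto d(a, y\cdot \Z_q)$ is constant on the orbit $x\cdot \Z_q$, so
\[
\sup_{a\in x\cdot \Z_q} d(a,\, y\cdot \Z_q) \;=\; d(x,\, y\cdot \Z_q).
\]
By the symmetric argument, $\sup_{b\in y\cdot \Z_q} d(b,\, x\cdot \Z_q) = d(y,\, x\cdot \Z_q)$.

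The second step is to show these two quantities coincide. Using the isometry property again, $d(y, x\cdot g) = d(y\cdot g^{-1}, x) = d(x, y\cdot g^{-1})$, and the map $g\mapsto g^{-1}$ is a bijection of $\Z_q$, so
\[
d(y, x\cdot \Z_q) \;=\; \min_{g\in \Z_q} d(y, x\cdot g) \;=\; \min_{g\in \Z_q} d(x, y\cdot g^{-1}) \;=\; d(x, y\cdot \Z_q).
\]
Putting this into the definition of Hausdorff distance,
\[
d_L([x],[y]) \;=\; \max\bigl\{\, d(x, y\cdot \Z_q),\, d(y, x\cdot \Z_q)\,\bigr\} \;=\; d(x, y\cdot \Z_q),
\]
and since $\Z_q$ is finite the supremum in $d(x, y\cdot \Z_q)=\inf_{g\in\Z_q} d(x,y\cdot g)$ is attained, yielding the final equality with $\min_{g\in\Z_q} d(x, y\cdot g)$.

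There is no genuine obstacle here; the proof is essentially a bookkeeping exercise once one recognizes that the isometric, transitive, finite action of $\Z_q$ makes all the distance-to-orbit quantities equal and turns the Hausdorff distance into an ordinary minimum. The only point requiring a little care is invoking the isometry property consistently when moving the group element from one side of $d(\cdot,\cdot)$ to the other.
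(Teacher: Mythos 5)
Your proof is correct and follows essentially the same route as the paper: both arguments rest on the fact that the $\Z_q$-action is by isometries (which the paper expresses concretely via the identity $\left\langle x\cdot g, y\cdot h\right\rangle_\R = \left\langle x, y\cdot (h-g)\right\rangle_\R$ inside $\arccos$), use this to show the distance-to-orbit function is constant on orbits so the outer suprema in the Hausdorff distance collapse, and invoke finiteness of $\Z_q$ to replace the infimum by a minimum. Your version is phrased more cleanly at the level of the metric rather than unwinding inner products, but the mathematical content is identical.
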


\begin{proof}
For $x,y\in \C^n$ let $\left\< x,y \right\>_\R := \mathsf{real}( \<x,y\>_\C )$.
By definition of Hausdorff distance, we have that \begin{align*}
d_L([x],[y]) &= \max\left\{ \max_{g\in\Z_q}\min_{h\in\Z_q}\arccos(\< x\cdot g, y\cdot h \>_\R) \right. , \\ & \left. \quad \max_{h\in\Z_q}\min_{g\in\Z_q}\arccos(\< x\cdot g, y\cdot h \>_\R) \right\}.
\end{align*}

Notice that
\begin{align*}
    \< x\cdot g, y\cdot h \>_\R &= \mathsf{real}\left( \left\< \zeta_q^g x, \zeta_q^h y \right\>_\C \right) \\
    &= \mathsf{real}\left( \left\< x, \zeta_q^{(h-g)} y \right\>_\C \right) \\
    &= \< x, y\cdot (h-g)\>_\R 
\end{align*}

And since $\Z_q$ is Abelian, then \begin{align*}
    \max_{h\in\Z_q}\min_{g\in\Z_q} & \arccos(\< x\cdot g, y\cdot h \>_\R) \\
    &= \max_{h\in\Z_q}\min_{g\in\Z_q}\arccos(\< x\cdot (g-h), y \>_\R) \\
    &= \max_{h\in\Z_q}\min_{g\in\Z_q}\arccos(\< x\cdot (-h), y\cdot (-g) \>_\R) \\
    &= \max_{h'\in\Z_q}\min_{g'\in\Z_q}\arccos(\< x\cdot h', y\cdot g' \>_\R).
\end{align*}

Thus \[
d_L([x],[y]) = \max\limits_{g\in\Z_q}\min_{h\in\Z_q}\arccos(\< x\cdot g, y\cdot h \>_\R). \] Furthermore $d_L([x],[y]) = \max\limits_{g\in\Z_q} d(x\cdot g, y\cdot \Z_q) = \max\limits_{g\in\Z_q} d(x, y\cdot (-g)\Z_q )$. 
Since $y\cdot \big((-g)\Z_q\big)  = y\cdot \Z_q $ for any $g\in\Z_q$, we obtain $d_L([x],[y]) = \max\limits_{g\in\Z_q} d(x, y\cdot \Z_q) = d(x, y\cdot \Z_q) = \min\limits_{h\in\Z_q} d(x, y\cdot h)$. 
\end{proof}

We will now describe a notion of \textbf{projection in $L_q^n$}
onto lower-dimensional Lens spaces.
Indeed,
let $u\in S^{2n-1} $. Since $\zeta_q^k w \in \linspan_\C(u)^\bot$ for any $k \in \Z_q$ and $w\in \linspan_\C(u)^\bot$, then \[ L_q^{n-1}(u) := ( \linspan_\C(u)^\bot \cap S^{2n-1} ) / \Z_q \] is isometric  to $L^{n-1}_q$.
%Moreover since $\linspan_\C^\bot(u) \cap S^{2n-1} \cong S^{2(n-1)-1}$ the notation of $L_q^{n-1}(u)$ make since since $L_q^{n-1}(u) = ( \linspan_\C^\bot(u) \cap S^{2n-1} ) / \Z_q \cong S^{2(n-1)-1}/ \Z_q = L_q^{n-1}$.
Let $P_u^\bot(v)= v - \<v,u\>_\C u $
for $v\in \C^n$,
and if
$v \notin \mathsf{span}_\C(u)$, then we let
\[
\mathcal{P}_u([v]) := \left[  P_u^\bot (v)
\big/\| P_u^\bot (v) \| \right]
 \in L^{n-1}_q(u)\]
It readily follows that
$\mathcal{P}_u$ is well defined, and that
\begin{lemma}\label{lemma:ditance_to_proj}
For $ u \in S^{2n-1}$
and $v\notin \mathsf{span}_\C(u)$,
we have
\[
d_L([v], \mathcal{P}_u([v])) = d\left(v \;,\; P_u^\bot (v)\big/\| P_u^\bot (v) \| \right)
\]
where $d$ is the  distance on $S^{2n-1}$.
Furthermore, $\mathcal{P}_u ([v])$ is the point in $L_q^{n-1}(u)$ closest to $[v]$ with respect to $d_L$.
\end{lemma}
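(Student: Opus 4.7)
The plan is to reduce everything to the explicit minimum formula from \Cref{prop:hausdorff_distance}, namely $d_L([v],[w]) = \min_{g\in\Z_q}\arccos(\langle v, \zeta_q^g w\rangle_\R)$, and then to exploit the fact that $w$ (and any competitor $w'$) lies in $\mathsf{span}_\C(u)^\bot$ so that the Hermitian inner product with $v$ only sees the component $P_u^\bot(v)$.

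First I would set $w = P_u^\bot(v)/\|P_u^\bot(v)\|$ and compute, for each $g\in\Z_q$,
\[
\langle v, \zeta_q^g w\rangle_\C \;=\; \zeta_q^{-g}\,\langle v, w\rangle_\C \;=\; \zeta_q^{-g}\,\langle P_u^\bot(v), w\rangle_\C \;=\; \zeta_q^{-g}\,\|P_u^\bot(v)\|,
\]
using the orthogonal decomposition $v = \langle v,u\rangle_\C\, u + P_u^\bot(v)$ together with $w\perp u$. Taking real parts gives $\langle v, \zeta_q^g w\rangle_\R = \cos(2\pi g/q)\,\|P_u^\bot(v)\|$, which (since $\|P_u^\bot(v)\|>0$) is strictly maximized at $g=0$. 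Plugging into the formula from \Cref{prop:hausdorff_distance} then yields $d_L([v],[w]) = \arccos\langle v,w\rangle_\R = d(v,w)$, which is the first claim.

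For the optimality statement, I would take an arbitrary $[w']\in L_q^{n-1}(u)$, represented by $w'\in\mathsf{span}_\C(u)^\bot\cap S^{2n-1}$. The same orthogonality argument gives $\langle v,\zeta_q^g w'\rangle_\C = \zeta_q^{-g}\langle P_u^\bot(v), w'\rangle_\C$, and Cauchy--Schwarz in $\C^n$ forces
\[
\langle v,\zeta_q^g w'\rangle_\R \;\leq\; \bigl|\langle P_u^\bot(v), w'\rangle_\C\bigr| \;\leq\; \|P_u^\bot(v)\|,
\]
with equality attained by the choice $w' = w$ at $g=0$. Since $\arccos$ is monotonically decreasing, this translates to $d_L([v],[w']) \geq \arccos\|P_u^\bot(v)\| = d_L([v],\mathcal{P}_u([v]))$, finishing the proof.

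I do not foresee a serious obstacle: the only subtlety is remembering to use the \emph{complex} Cauchy--Schwarz bound (not the real one) to handle the extra factor $\zeta_q^{-g}$, which is precisely what ensures the minimum over $\Z_q$ never beats the projection direction. Everything else is a one-line orthogonality computation made available by the definition $P_u^\bot(v) = v - \langle v,u\rangle_\C u$.
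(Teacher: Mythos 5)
Your proposal is correct and follows essentially the same route as the paper's proof: reduce to the minimum formula of \Cref{prop:hausdorff_distance}, use the orthogonal decomposition $v = \langle v,u\rangle_\C u + P_u^\bot(v)$ to kill the $u$-component, and apply Cauchy--Schwarz together with the monotonicity of $\arccos$ for the optimality claim. The only (cosmetic) difference is that you evaluate $\langle v,\zeta_q^g w\rangle_\R = \cos(2\pi g/q)\,\|P_u^\bot(v)\|$ explicitly to locate the minimizing $g$, whereas the paper argues via an $\argmax$ characterization of $g^*$; both are sound.
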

\begin{proof}
From \Cref{prop:hausdorff_distance} we know that \begin{align*}
    d_L([v], P_u^\bot([v])) &= \min_{g\in\Z_q} d(v, P_u^\bot([v])\cdot g ) \\
    &= \min_{g\in\Z_q} d\left(v, \frac{P_u^\bot (v)}{\| P_u^\bot (v) \|}\cdot g \right).
\end{align*} Let $g^* := \argmin\limits_{g\in\Z_q} d\left(v, \frac{P_u^\bot (v)}{\| P_u^\bot (v) \|}\cdot g \right)$, so we have \[ d_L([v], P_u^\bot([v])) = \arccos \left( \left< v , \frac{P_u^\bot (v)}{\| P_u^\bot (v) \|}\cdot g^* \right>_\R\right). \]

Notice that the argument of the $\arccos$  can be simplified as follows \begin{align*}
    \left< v , \frac{P_u^\bot (v)}{\| P_u^\bot (v) \|}\cdot g^* \right>_\R &= \left< \<v,u\>_\C u + P_u^\bot(v) , \frac{P_u^\bot (v)}{\| P_u^\bot (v) \|}\cdot g^* \right>_\R \\
    &= \left< \<v,u\>_\C u , \frac{P_u^\bot (v)}{\| P_u^\bot (v) \|}\cdot g^* \right>_\R \\
    &\quad + \left< P_u^\bot(v) , \frac{P_u^\bot (v)}{\| P_u^\bot (v) \|}\cdot g^* \right>_\R.
\end{align*} since $u$ and $P_u^\bot(v)$ are orthogonal in $\C^n$ then they are also orthogonal in $\R^{2n}$, making the then the firs summand on the right hand side equal to zero. Additionally since $\arccos$ as a real valued function is monotonically decreasing we have \[g^* = \argmax_{g\in\Z_q}  \frac{1}{\| P_u^\bot (v) \|} \left< P_u^\bot(v) , P_u^\bot (v)\cdot g \right>_\R .\]

Using the fact that the action of $\Z_q$ is an isometry (and therefore an operator of norm one) as well as the Cauchy-Schwartz inequality we obtain \begin{align*}
    \frac{ \left< P_u^\bot(v) , P_u^\bot (v)\cdot g \right>_\R }{\| P_u^\bot (v) \|}  &\leq \left\vert \frac{1}{\| P_u^\bot (v) \|} \left< P_u^\bot(v) , P_u^\bot (v)\cdot g \right>_\R \right\vert \\
    &\leq \frac{1}{\| P_u^\bot (v) \|} \| P_u^\bot(v) \| \| P_u^\bot(v)\cdot g \| \\
    &= \| P_u^\bot(v)\cdot g \| = \| P_u^\bot(v) \|.
\end{align*} And the equality holds whenever $g=e \in \Z_q$, so we must have $g^* = e$.

Let $[w]\in L_q^{n-1}(u)$, so $w\in \linspan_\C^\bot(u)$ which implies that for any $h\in\Z_q$ \[\<u, w\cdot h \>_\C = \sum_k u_k(\overline{\zeta_q^h w_k}) = \zeta_q^{-h}\sum_k u_k\overline{w_k} = \zeta_q^{-h}\<u, w\> = 0. \] In other words $w\cdot h \in \linspan_\C^\bot(u)$ for any $h\in\Z_q$.

Thus by the Cauchy-Schwartz inequality \begin{align*}
    \< v,w\cdot h \>_\R &= \< \<v,u\>_\C u + P_u^\bot(v), w\cdot h \>_\R = \< P_u^\bot(v), w\cdot h \>_\R \\
    &\leq \vert \< P_u^\bot(v), w\cdot h \>_\R \vert \leq \|P_u^\bot(v)\| \|w\cdot h\| \\
    &= \|P_u^\bot(v)\| \|w\| = \|P_u^\bot(v)\|,
\end{align*} since the action of $\Z_q$ is an isometry and $w\in S^{2n-1}$.

Finally since $\arccos$ is decreasing \[d_L([v], P_u^\bot([v])) = \arccos(\|P_u^\bot(v)\|) \leq \arccos(\< v,w\cdot h \>_\R)\] for all $h\in\Z_q$, thus $d_L([v], P_u^\bot([v])) \leq d_L([v],[w])$.
\end{proof}

This last result suggests that a PCA-like approach is possible for  dimensionality reduction in Lens spaces.
Specifically,  for $Y = \{[y_1], \dots, [y_N]\}\subset L_q^n$, the goal is to find $u \in S^{2n-1}$ such that $L_q^{n-1}(u)$ is the best $(n-1)$-Lens space approximation to $Y$,
then project $Y$  onto $L^{n-1}_q(u)$ using $\mathcal{P}_u$,
and repeat the process iteratively reducing the dimension by 1 each time.
At each stage, the appropriate constrained optimization problem is
\begin{align*}
    u^* &= \argmin_{u\in \C^n , \|u\|=1} \sum_{j=1}^N d_L([y_j], \mathcal{P}_u([y_i]))^2 \\
    %&= \argmin_{u\in \C^n , \|u\|=1} \sum_{j=1}^N \arccos(\| P_u^\bot(y_i) \|) ^2 \\
    %&= \argmin_{u\in \C^n , \|u\|=1} \sum_{j=1}^N \arccos(\| y_i - \<y_i, u\>_\C u \|) ^2 \\
    &= \argmin_{u\in \C^n , \|u\|=1} \sum_{j=1}^N \left( \frac{\pi}{2} - \arccos(\vert \<y_i, u\> \vert ) \right)^2
\end{align*}
which can be linearized using the Taylor series expansion of $\arccos(\theta)$ around $0$.
Indeed, $\vert \frac{\pi}{2} -\arccos(\theta) \vert \approx \vert\theta\vert$ to third order, and thus \[ u^* \approx \argmin_{u\in \C^n , \| u\|=1} \sum_{j=1}^N \vert \<y_i, u\> \vert^2 . \] This approximation is a linear least square problem whose solution is given by the eigenvector corresponding to the smallest eigenvalue of the covariance matrix \[ \cov\left( y_1, \dots, y_N \right) = \left[\begin{smallmatrix}
\vert & & \vert \\
y_1 & \cdots & y_N \\
\vert & & \vert
\end{smallmatrix}\right]
\left[ \begin{smallmatrix}
- & \overline{y}_1 & - \\
 & \vdots &  \\
- & \overline{y}_N & -
\end{smallmatrix} \right]. \]

Moreover, for any $\alpha_1, \dots, \alpha_N\in S^1 \subset \C$  we have that
$ \cov\left( \alpha_1 y_1, \dots, \alpha_N y_N \right) = \cov\left( y_1, \dots, y_N \right)$, so $\cov(Y)$ is well defined for $Y\subset L_q^n$.

\subsection{Inductive construction of LPCA}

Let  $v_n = \LastLensComp(Y)$ be the eigenvector of $\cov(Y)$ corresponding to the smallest eigenvalue.
Assume that we have constructed $v_{k+1}, \dots, v_n \in S^{2n-1}$ for $1 <k < n  $, and let $\{u_1, \dots, u_k \}$ be an orthonormal basis for $\linspan_\C(v_{k+1}, \dots, v_n )^\bot$.
Let $ U_k  \in \C^{n \times k}$
be the matrix with columns
$ u_1, \ldots, u_k $, and let $U_k^\dag $ be its conjugate transpose.
We define the \textbf{$k$-th Lens Principal component} of $Y$ as the vector
\[
v_k \;:= \; U_k \cdot \LastLensComp\left(\frac{U_k^\dag y_1}{\|U_k^\dag y_1\|} , \ldots, \frac{U_k^\dag y_N}{\|U_k^\dag y_N\|} \right)
\]

This inductive procedure yields a collection $[v_2], \dots, [v_n] \in L_q^n$,
and we let  $v_1 \in S^{2n-1}$   be such that $\linspan_\C \{v_1\} = \linspan_\C \{v_2, \dots, v_n\}^\bot$.
Finally  \[ \LPCA(Y) := \{ [v_1], \dots, [v_n] \} \] are the \textbf{Lens Principal Components} of $Y$.
Let $V_k \in \C^{n\times k}$ be the $n$-by-$k$ matrix with columns $v_1,\ldots, v_k$,
and let $P_k(Y) \subset L^k_q$ be the set of classes $\left[ \frac{V_k^\dag y_j}{\|V_k^\dag y_j\|}\right]$, $1 \leq j \leq  N$.
The point clouds $P_k(Y)$, $k=1,\ldots, n$, are the \textbf{Lens Principal Coordinates} of $Y$.

\subsection{Choosing a target dimension.}
The \textbf{variance recovered} by the first $k$ Lens Principal Components $[v_1], \dots, [v_k] \in L^n_q$
is  defined as 
\[  
\var_k(Y) := \frac{1}{N}\sum_{l=2}^k \sum_{j=1}^N d_L
\left(
\left[\frac{V_l^\dag y_j}{\|V_l^\dag y_j\|}\right], L_q^{l-1}(e_{l-1}) 
\right)^2 
\] 
where $V_l$ is the $n$-by-$l$ matrix with columns $v_1, \ldots, v_l$, $1<l\leq k$, 
and $e_{l-1} \in \C^{l}$ is the vector $[0,\ldots, 0, 1, 0]$. 

Therefore, the \textbf{percentage of cumulative variance} $p.\var(k) := \var_k (Y) \big/\var_n (Y)$, can be interpreted as the portion 
of total variance of $Y$ along  $\LPCA(Y)$, explained by the first $k$ components.

Thus we can select the target dimension as the smallest $k$ for which $p.\var_k(Y)$ is greater than a predetermined value. In other words, we select the dimension that recovers a significant portion of the total variance.
Another possible guideline to choose the target dimension is as the minimum value of $k$ for which $p.\var(k) - p.\var(k+1) < \gamma$ for a small $\gamma>0$.

\subsection{Independence of the cocycle representative.}

Let  $\eta \in Z^1(\mathcal{N}(\mathcal{B}_\epsilon); \Z_q)$ be such that $[\eta] \neq 0$ in  $H^1(\mathcal{N}(\mathcal{B}_\epsilon); \Z_q)$, 
and let $\eta' = \eta + \delta^0(\alpha)$ with $\alpha \in C^0(\mathcal{N}(\mathcal{B}_\epsilon); \Z_q)$. If $b\in U_j$, then \begin{align*}
    f_{\eta'}(b) %&= [ \sqrt{\phi_1(b)}\zeta_q^{\eta_{j1}'} : \cdots : \sqrt{\phi_n(b)}\zeta_q^{\eta_{jn}'} ] \\
    % & =  [ \sqrt{\phi_1(b)}\zeta_q^{\eta_{j1} + (\delta^0\alpha)_{j1}} : \cdots : \sqrt{\phi_n(b)}\zeta_q^{\eta_{jn} + (\delta^0\alpha)_{jn}} ] \\
    % & =  [ \sqrt{\phi_1(b)}\zeta_q^{\eta_{j1} + \alpha(\partial_1(j1))} : \cdots : \sqrt{\phi_n(b)}\zeta_q^{\eta_{jn} + \alpha(\partial_1(jn))} ] \\
    %& =  [ \sqrt{\phi_1(b)}\zeta_q^{\eta_{j1} + \alpha_1 - \alpha_j} : \cdots : \sqrt{\phi_n(b)}\zeta_q^{\eta_{jn} + \alpha_n - \alpha_j} ] \\
    & =  [ \sqrt{\phi_1(b)}\zeta_q^{\eta_{j1} + \alpha_1} : \cdots : \sqrt{\phi_n(b)}\zeta_q^{\eta_{jn} + \alpha_n } ]
\end{align*}

If $Z_\alpha$
is  the square diagonal matrix with entries
$\zeta_q^{\alpha_1} , \zeta_q^{\alpha_2},\ldots,\zeta_q^{\alpha_n}  $, then  $ f_{\eta'}(b) = Z_\alpha \cdot f(b) $. Moreover, 
after taking classes in $L^n_q$, this implies that $f_{\eta'}(X) = Z_\alpha \cdot f(X)$.
Since
$\cov (Z_\alpha \cdot f(X) )=  Z_\alpha\cov(f(X)) Z_\alpha^\dag$ and   $Z_\alpha$ is orthonormal, then if $v$ is an eigenvector of $\cov(f(X) )$ with eigenvalue $\sigma$, we also have that $Z_\alpha v$ is an eigenvector of $\cov (Z_\alpha \cdot f(X) )$ with the same eigenvalue. Therefore \[ \LastLensComp( f_{\eta'}(X) )  = Z_\alpha \LastLensComp(f(X)). \]

Since each component in $\LPCA$ is obtained in the same manner, we have that $ \LPCA(f_{\eta'}(X)) = Z_\alpha \LPCA(f(X)). $ Thus,
 the lens coordinates from two  cohomologous cocycles $\eta $ and $\eta + \delta^0(\alpha)$ (i.e., representing the same  cohomology class) only differ by the  isometry of $L_q^n$ induced by the linear map $Z_\alpha$.

\subsection{Visualization map for $L_3^2$.} 

Given $v_1, \dots, v_n \in S^{2n-1}$ representatives for the classes in $\LPCA(Y)$. We want to visualize $P_2(Y) \subset L_3^2$ in the fundamental domain described in \Cref{subsec:fundamental_domain}. Let
\[ P_2(Y) =
\big\{ \big[
\< y_i, v_1\>_\C ,
\< y_i, v_2\>_\C \big] \in S^3\subset \C^2 : [y_i]\in Y
\big\} \] and define
$G : P_2(Y) \longrightarrow S^3 \subset \C^2$ to be \begin{equation}\label{eqn:visualization_map}
    G(z, w) := \left( \zeta_3^{-k} z, \left( \arg(w) - \frac{\pi}{3} \right) \sqrt{1 - \norm{z}^2} \right)
\end{equation} where $\arg(w)\in \left[ 0, \frac{2\pi}{3}\right)$, and $k$ an integer such that \[ \arg(z) = k\frac{2\pi}{3} + \theta,\] where $\theta$ is the remainder after division by $\frac{2\pi}{3}$.

\section{Examples}

\subsection{The Circle $S^1$}

Let $S^1 \subset \C$ be the unit circle, and let $X$ a random sample around $S^1$, with $10,000$ points and Gaussian noise in the normal direction. $L\subset X$ is a landmark set with 10 points obtained as described in \Cref{subsec:landmark}.

\begin{figure}[!htb]
    \centering
    \includegraphics[width=0.5\textwidth]{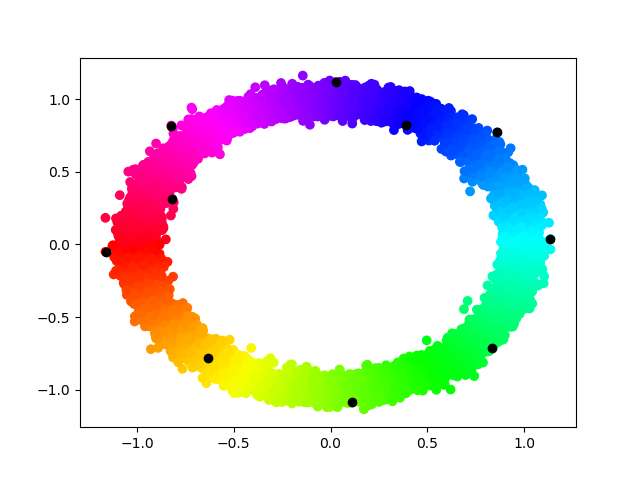}\includegraphics[width=0.5\textwidth]{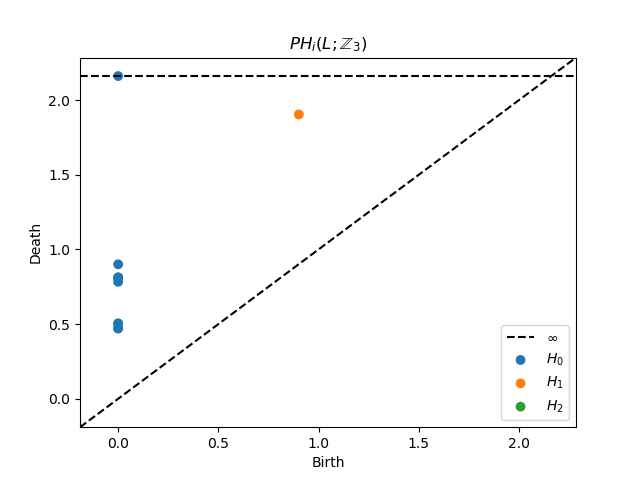}
    \caption{\textbf{Left:} Sample $X$, in black landmark set $L\subset X$. \textbf{Right:} $PH^i(\mathcal{R}(L); \Z_3)$ for $i=0,1,2$.}
    \label{fig:s1}
\end{figure}

% To define $f:X \rightarrow L_q^{\norm{L}}$ we need a nontrivial element in $Z^1(\mathcal{N}(\mathcal{B}_\epsilon); \Z_q)$. \Cref{subsec:cech-rips} implies it is enough to choose $\epsilon > 0$ such that $\iota([\eta']) \neq 0$ for a non-zero $[\eta']\in H^1( \mathcal{R}_{2\epsilon}(L) ;\Z_q)$.

%\begin{figure}[!htb]
%    \centering
%    \includegraphics[width=0.25\textwidth]{}\includegraphics[width=0.25\textwidth]{}
%    \caption{Open cover $\{ B_\epsilon(l) \}_{l\in L}$. $2$-skeleton of the complex $\mathcal{N}(B_\epsilon)$.}
%    \label{fig:s1_nerve}
%\end{figure}

Let $a$ be the cohomological death of the most persistent class $PH^1(\mathcal{R}(L); \Z_q)$. For $\epsilon := a + 10^{-5}$ and $\eta = i^*(\eta') \in Z^1(\mathcal{N}(\mathcal{B}_\epsilon); \Z_q)$ we define the map $f :B_\epsilon \rightarrow L_3^{10}$ as in \Cref{eqn:f}.

After computing $\LPCA$ for $f(X) \subset L_3^{10}$ and the percentage of cumulative variance $p.\var_Y(k)$ we obtain the row in \Cref{tab:cum_varaince} with label $S^1$ (see \Cref{fig:s1_cumulative_variance} for more details). We see that dimension $1$ recovers $\sim 60\%$ of the variance. Moreover, \Cref{fig:s1_embedding} shows $P_2(f(X)) \subset L_3^2$ in the fundamental domain described in \Cref{subsec:fundamental_domain} trough the map in \Cref{eqn:visualization_map}.

\begin{figure}[!htb]
    \centering
    \includegraphics[width=0.6\textwidth]{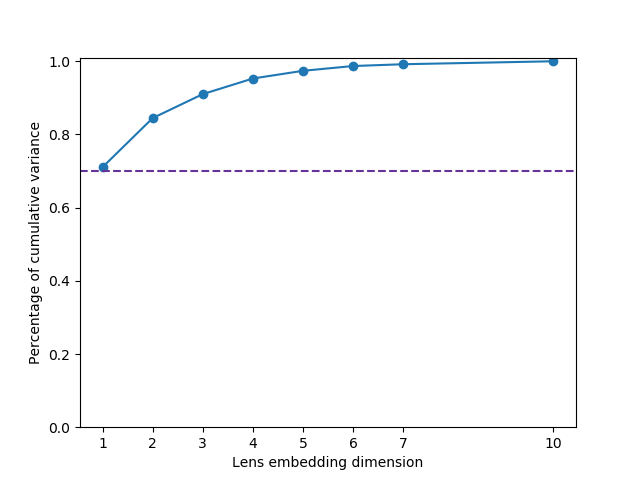}
    \caption{Profile of recovered variance on $S^1$.}
    \label{fig:s1_cumulative_variance}
\end{figure}

\begin{table}[!htb]
\centering
\begin{tabular}{|l|l|l|l|l|l|}
\hline
\textbf{Dim. ($n$)}                      & \textbf{1}  & \textbf{2}  & \textbf{3}  & \textbf{4}  & \textbf{5}    \\ \hline
\textbf{$S^1$}      & 0.62        & 0.75        & 0.81        & 0.86        & 0.89        \\ \hline
\textbf{$M(\Z_3,1)$} & 0.56        & 0.7         & 0.76        & 0.8         & 0.83        \\ \hline
\textbf{$L_3^2$}   & 0.47        & 0.62        & 0.67        & 0.71        & 0.73        \\ \hline
\end{tabular}
\caption{Percentage of recovered variance in $L^n_3$.}
\label{tab:cum_varaince}
\end{table}

\begin{figure}[!htb]
    \centering
    \includegraphics[width=0.9\textwidth]{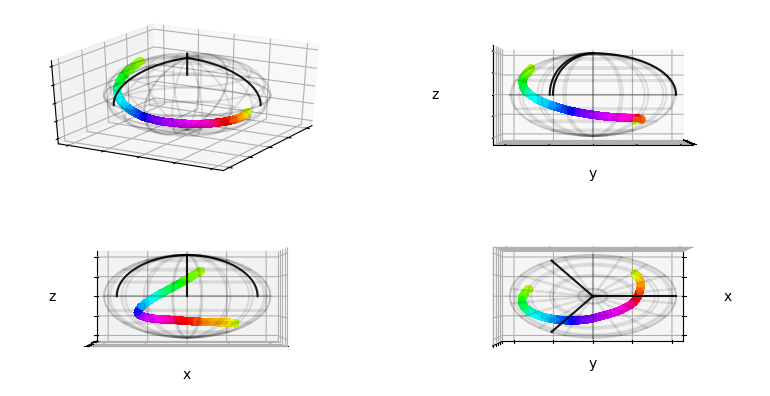}
    \caption{Visualization $P_2 (f(X)) \subset L_3^2$.}
    \label{fig:s1_embedding}
\end{figure}

One key aspect of LC (Lens coordinates) is that it is designed to highlight the cohomology class $\eta$ used on \Cref{eqn:f}. This is easily observed in this example; 
we selected the most persistent class in $PH^1(\mathcal{R}(L); \Z_3)$ and as a consequence in \Cref{fig:s1_embedding} we see how this class is preserved while all the information in the normal direction is lost in the process.

% \begin{figure}
%     \centering
%     \includegraphics[width=0.35\textwidth]{}}
%     \caption{Visualization of the resulting Lens coordinates, $P_1 (f(X)) \subset L_1^2$}
%     \label{fig:s1_embedding_s1}
% \end{figure}

\subsection{The Moore space $M(\Z_3, 1)$.}

Let $G$ be an abelian group and $n\in \N$. The Moore space $M(G, n)$ is a CW-complex such that $H_n(M(G, n), \Z) = G$ and $\tilde{H}_i(M(G, n), \Z) = 0$ for all $i \neq n$. A well known construction for $M(\Z_3, 1)$ can be found in \cite{hatcher2002algebraic}. For $x,y\in  \C$ with $|x|, |y| \leq 1$, we let

\begin{equation} \label{eqn:metric_moore}
    d(x,y) = \begin{dcases}
    \sqrt{\vert \<x,y\>_\R \vert } & \text{ if } \norm{x}, \norm{w} < 1 \\
    \min_{\zeta \in \Z_3} \sqrt{\vert \<x,\zeta y\>_\R \vert} & \text{ if } \norm{x} = 1 \text{ or } \norm{w} = 1 \\
    \min_{\zeta \in \Z_3} \arccos(\vert \<x,\zeta y\>_\R \vert) & \text{ if } \norm{x} = 1 \text{ and } \norm{w} = 1
    \end{dcases}.
\end{equation} \Cref{eqn:metric_moore} defines a metric on $M(\Z_3, 1)$,

\begin{figure}[!htb]
    \centering
    \includegraphics[width=0.5\textwidth]{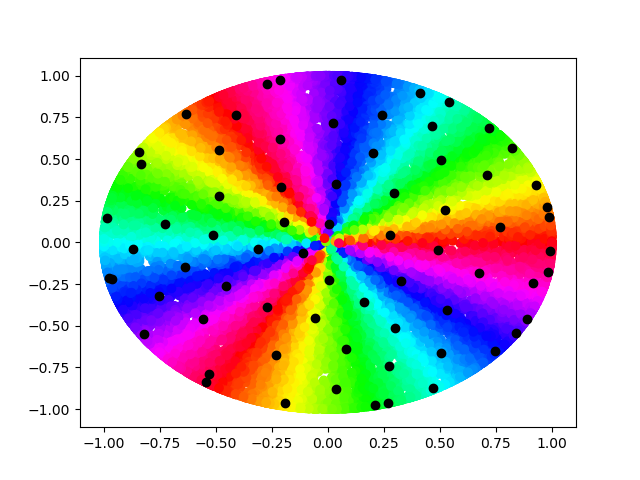}\includegraphics[width=0.5\textwidth]{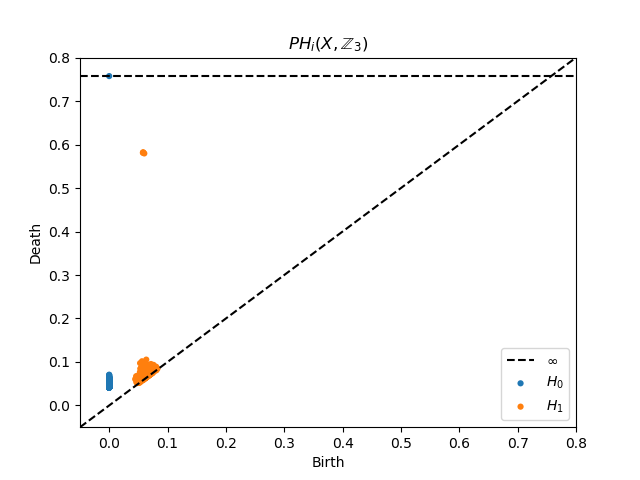}
    \caption{\textbf{Left:} $X\subset M(\Z_3, 1)$ with landmarks in black. \textbf{Right:} $PH^i(\mathcal{R}(L); \Z_3)$ for $i=0,1$.}
    \label{fig:moore}
\end{figure}

\Cref{fig:moore}, on the left, shows a sample $X\subset M(\Z_3, 1)$ with $\norm{X} = 15,000$ and $70$ landmarks. The landmarks were obtained by minmax sampling after feeding the algorithm with an initial set of $10$ point on the boundary on the disc.
\Cref{fig:moore_homology_2} shows the persistent cohomology of $\mathcal{R}(L)$ with coefficients in $\Z_2$ and $\Z_3$ side-by-side.

\begin{figure}[!htb]
    \centering
    \includegraphics[width=0.5\textwidth]{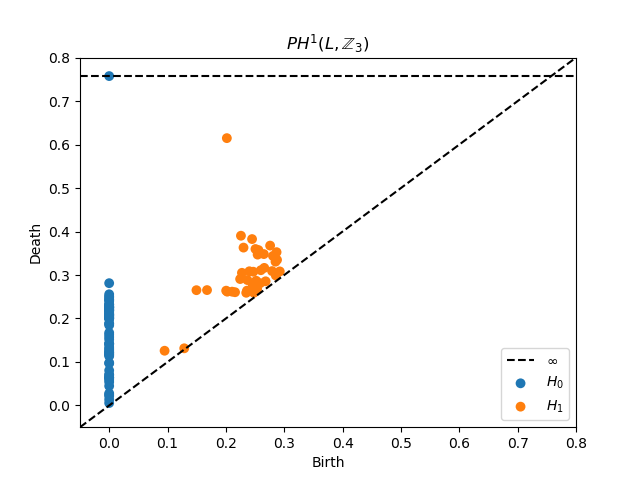}\includegraphics[width=0.5\textwidth]{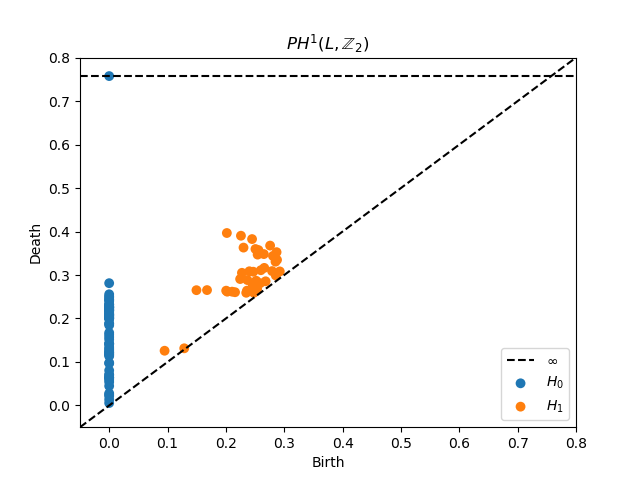}
    \caption{$PH^i(R(L); \F)$ for $i=0,1$ and $\F = \Z_2, \Z_3$.}
    \label{fig:moore_homology_2}
\end{figure}

We compute $ f: M(\Z_3, 1) \longrightarrow L_3^{70}$ analogously to the previous example and obtain a point cloud $f(X)\subset  L_3^{70}$. 
The profile of recovered variance is shown   in \Cref{tab:cum_varaince}. Dimension $2$ provides a low dimensional representation of $f(X)$ inside $L_3^2$ with $70\%$ of recovered variance (\Cref{fig:moore_cumulative_variance}).

\begin{figure}[!htb]
    \centering
    \includegraphics[width=0.6\textwidth]{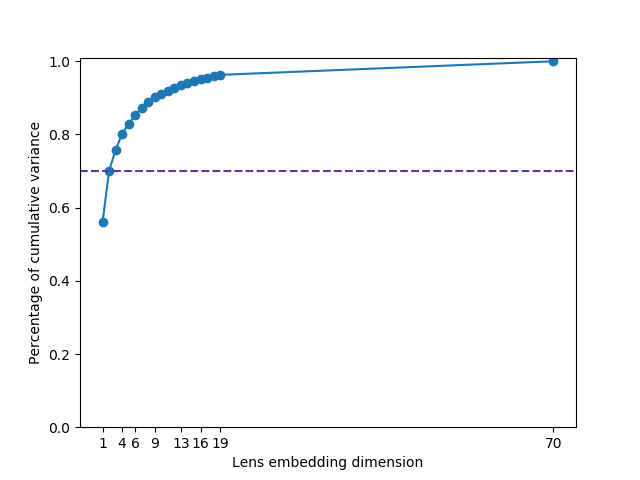}
    \caption{Profile of recovered variance on $M(\Z_3,1)$.}
    \label{fig:moore_cumulative_variance}
\end{figure}

% \begin{figure}[!htb]
%     \centering
%     \includegraphics[width=0.35\textwidth]{images/quotient_explained_variance.png}
%     \caption{Percentage of recovered variance.}
%     \label{fig:moore_variance}
% \end{figure}

\begin{figure}[!htb]
    \centering
    \includegraphics[width=0.9\textwidth, height=0.4\textwidth]{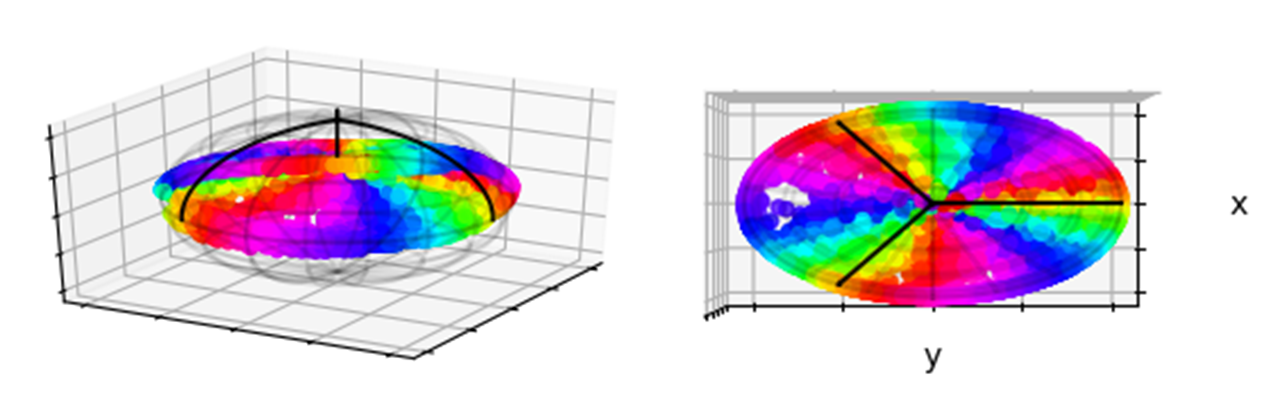}
    \caption{Visualization of the resulting $P_2 (f(X)) \subset L_3^2$.}
    \label{fig:moore_embedding}
\end{figure}

Since $f$ classifies the principal $\Z_3$-bundle $P_\eta$ over $M(\Z_3, 1)$, then $f$ must be homotopic to the inclusion of $M(\Z_q, 1)$ in $L_q ^\infty$. \Cref{fig:moore_embedding} shows $X\subset M(\Z_3,1)$ mapped by $f$ in $L_3^2$. Notice the identifications on $X$ are handled by the identification on $S^1\times\{0\} \subset D^3$ from the fundamental domain on \Cref{subsec:fundamental_domain}. See \url{https://youtu.be/_Ic730_xFkw} for a more complete visualization.

\subsection{The Lens space $L_3^2 = S^{3} / \Z_3$.}

We use the metric defined in \Cref{eqn:hausdorff_distance} on $L_3^2$ and randomly sample $15,000$ points to create $X\subset L_3^2$. \Cref{fig:lens}(left) shows the sample set using the fundamental domain from \cref{subsec:fundamental_domain}.

\begin{figure}[!htb]
    \centering
    \includegraphics[width=0.9\textwidth, height=0.4\textwidth]{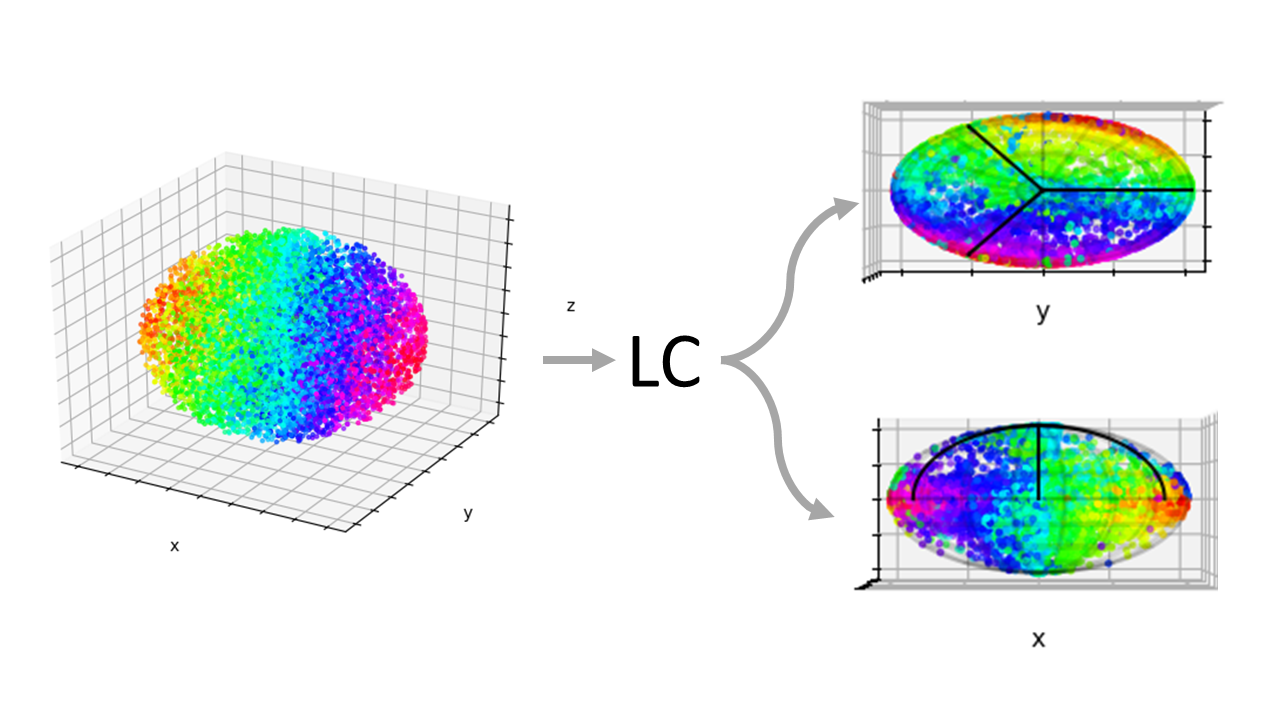}
    \caption{\textbf{Left:} $X\subset L_3^2$. \textbf{Right:} Lens coordinates.}
    \label{fig:lens}
\end{figure}

We can use $PH^i(\mathcal{R}(X); \Z_2)$ and $PH^i(\mathcal{R}(X); \Z_3)$ to verify that the sampled metric space has the expected topological features. \Cref{fig:lens_homology} contains the corresponding persistent diagrams.

\begin{figure}[!htb]
    \centering
    \includegraphics[width=0.5\textwidth]{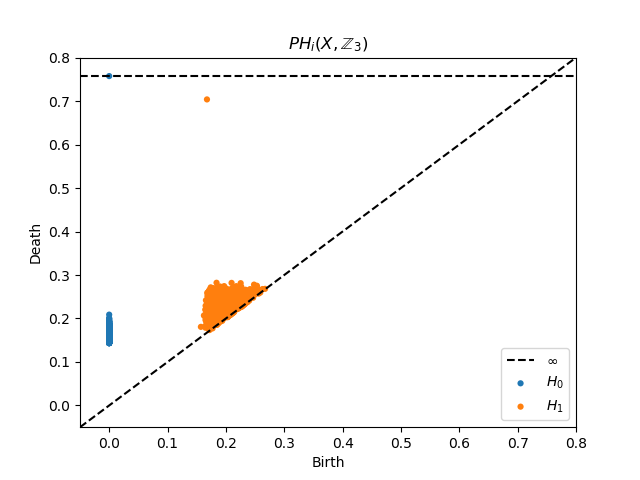}\includegraphics[width=0.5\textwidth]{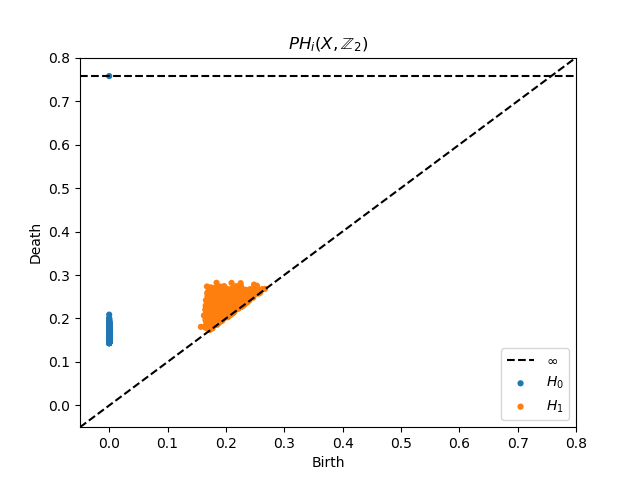}
    \caption{$PH^i(\mathcal{R}(L); \Z_3)$ for $i=0,1$. $PH^i(\mathcal{R}(L); \Z_2)$ for $i=0,1$.}
    \label{fig:lens_homology}
\end{figure}

Just as in the previous examples define $f: L_3^2 \rightarrow L_3^\infty$ using the most persistent class in $PH^1(\mathcal{R}(L); \Z_3)$. 
The homotopy class of $f$ must be the same as that of the inclusion $L_3^2 \subset  L_3^\infty$, 
since $f$ classifies the $\Z_3$-principal bundle $P_\eta$. Thus we expect $L_3^2$ to be preserved up to homotopy under $\LPCA$.
% \begin{figure}[!htb]
%     \centering
%     \includegraphics[width=0.35\textwidth]{}
%     \caption{Visualization of the resulting Lens coordinates, $P_2 (f(X)) \subset L_3^2$.}
%     \label{fig:Lq_embedding}
% \end{figure}
\Cref{fig:lens} offers a side and top view of $P_2 (f(X)) \subset L_3^2$. Here we clearly see how the original data set $X$ is transformed while preserving the identifications on the boundary of the fundamental domain.
% \begin{figure}[!htb]
%     \centering
%     \includegraphics[width=0.35\textwidth]{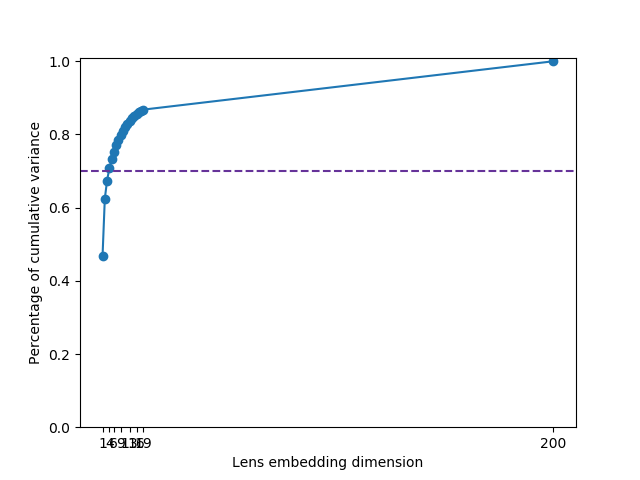}
%     \caption{Percentage of recovered variance.}
%     \label{fig:Lq_variance}
% \end{figure}
Finally in \Cref{tab:cum_varaince} we show  the variance profile for the dimensionality reduction problem. 
We see that for dimension $4$ we have recovered more than $70\%$ of the total variance as seen in \Cref{tab:cum_varaince} and \Cref{fig:Lq_cumulative_variance}.

\begin{figure}[!htb]
    \centering
    \includegraphics[width=0.6\textwidth]{}
    \caption{Profile of recovered variance on $L_3^2$.}
    \label{fig:Lq_cumulative_variance}
\end{figure}

\subsection{Isomap dimensionality reduction}

We conclude this section by providing evidence that Lens coordinates (LC) preserve topological features when compared to other dimensionality reduction algorithms. For this purpose we use Isomap (\cite{isomap}) as our point of comparison.

The Isomap algorithm consist of 3 main steps. The first step determines neighborhoods of each point using $k$-th nearest neighbors. The second step estimates the geodesic distances between all pairs of points using shortest distance path, and the final step applies classical MDS to the matrix of graph distances.

Let $\dgm$ be a persistent diagram. Define $\per_1$ to be the largest persistence of an element in $\dgm$, and let $\per_2$ be the second largest persistence of an element $\dgm$.

\begin{table}[!htb]
    \centering
    \begin{tabular}{|c|c|c|c|}
\hline
\multicolumn{2}{|c|}{$\per_1 / \per_2$}   & $\Z_2$                         & $\Z_3$                         \\ \hline
                               & Isomap & 1.0105                        & 1.0105                        \\ \cline{2-4}
\multirow{-2}{*}{$M(\Z_q, 1)$} & LC   & {\color[HTML]{000000} 1.7171} & {\color[HTML]{32CB00} 3.6789} \\ \hline
                               & Isomap & 1.0080                        & 1.0080                        \\ \cline{2-4}
\multirow{-2}{*}{$L_3^2$}      & LC   & {\color[HTML]{000000} 1.1592} & {\color[HTML]{32CB00} 2.8072} \\ \hline
\end{tabular}
    \caption{In green we highlight the fraction that indicates which method better identifies the topological features.}
    \label{tab:persistence_quotient}
\end{table}

For both $M(\Z_3,1)$ and $L_3^2$ it is clear that the Isomap projection fails to preserve the difference between the cohomology groups with coefficients in $\Z_2$ and $\Z_3$. On the other hand the LC projections maintains this difference in both examples (see \Cref{tab:moore_iso_v_lpca,,tab:lens_iso_v_lpca} for more details).

\begin{table*}[!htb]
    \centering
    \begin{tabular}{c|c|c}
         & Coefficients $\Z_2$ & Coefficients $\Z_3$ \\ \hline
        Isomap & \includegraphics[width=0.35\textwidth]{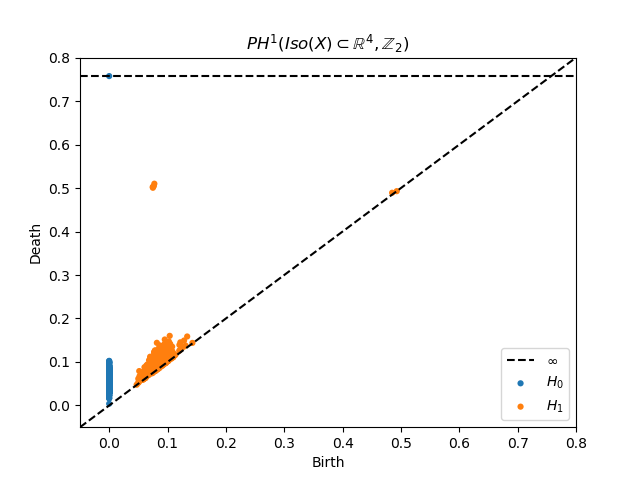} & \includegraphics[width=0.35\textwidth]{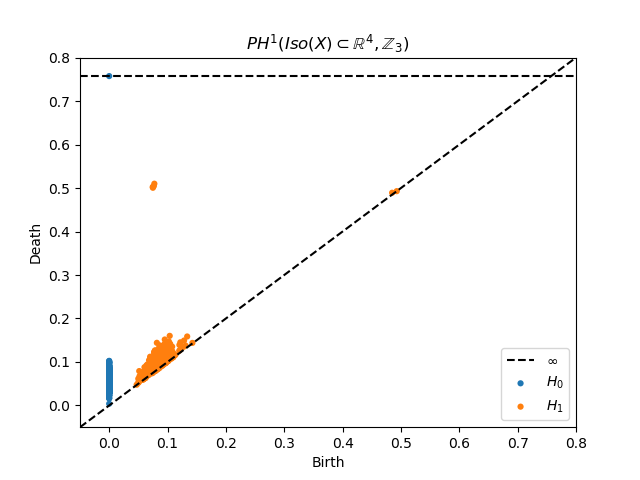} \\ \hline
        LC   & \includegraphics[width=0.35\textwidth]{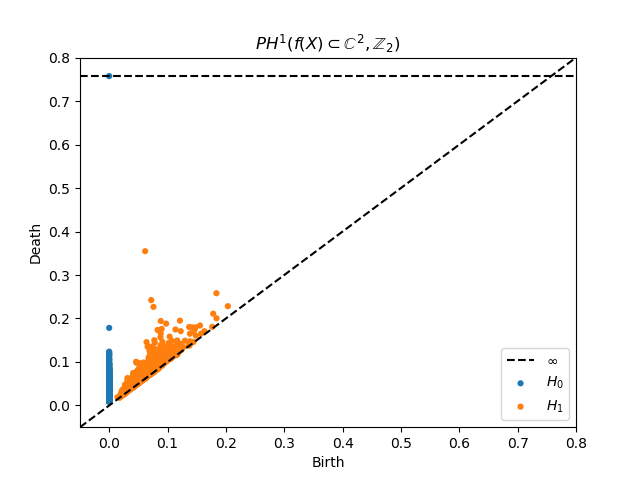} & \includegraphics[width=0.35\textwidth]{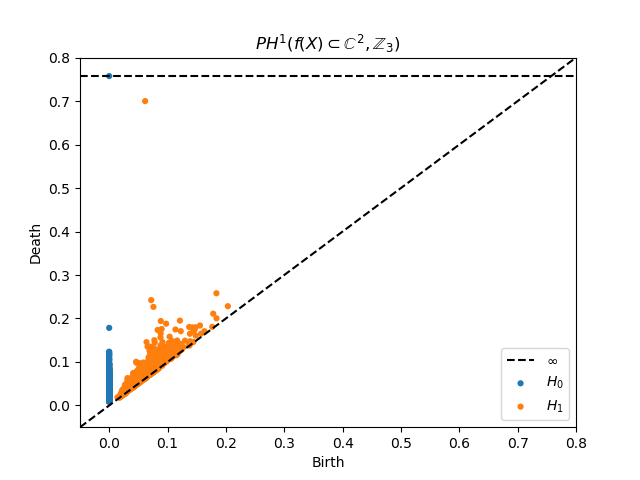}
    \end{tabular}
    \caption{Persistent homology of the Isomap vs. LPCA for $M(\Z_3, 1)$ into a $4$ dimensional space.}
    \label{tab:moore_iso_v_lpca}
\end{table*}

\begin{table*}[!htb]
    \centering
    \begin{tabular}{c|c|c}
         & Coefficients $\Z_2$ & Coefficients $\Z_3$ \\ \hline
        Isomap & \includegraphics[width=0.35\textwidth]{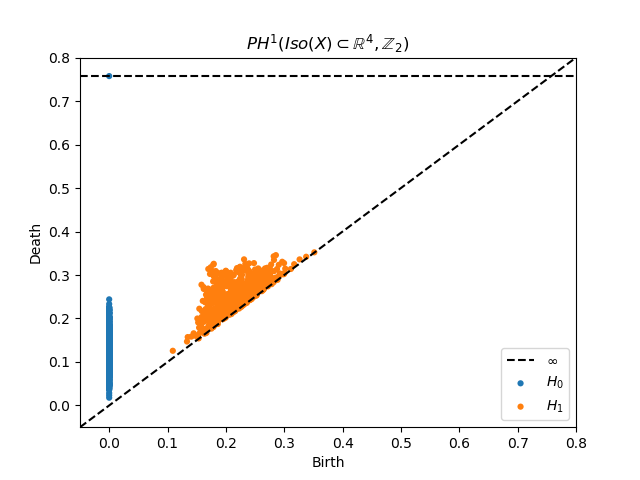} & \includegraphics[width=0.35\textwidth]{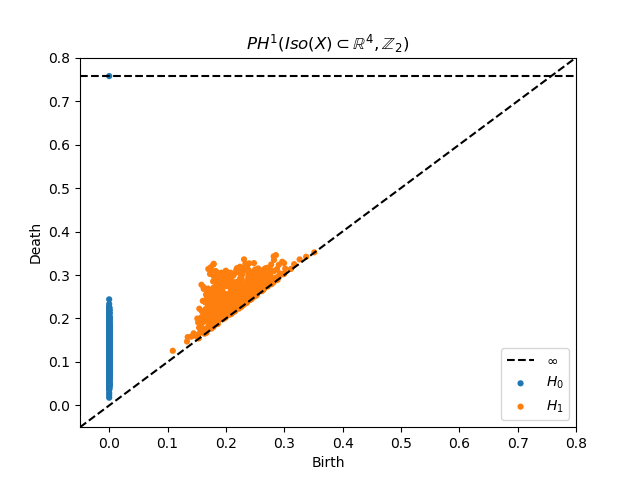} \\ \hline
        LC   & \includegraphics[width=0.35\textwidth]{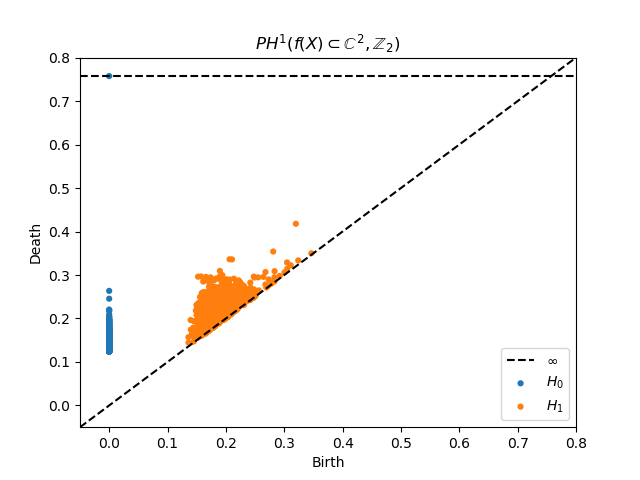} & \includegraphics[width=0.35\textwidth]{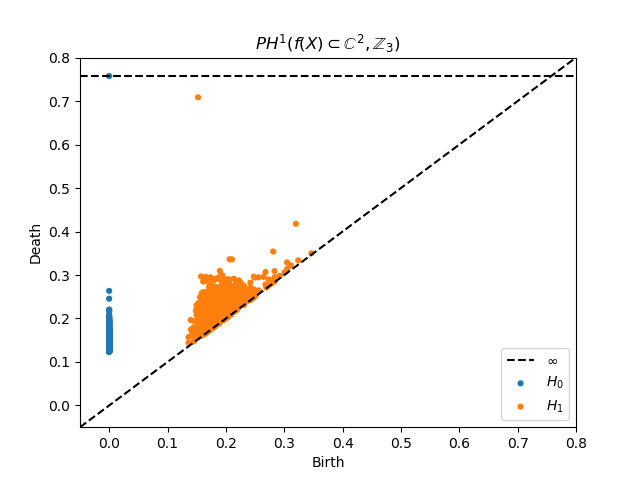}
    \end{tabular}
    \caption{Persistent homology of the Isomap vs. LPCA for $L_3^2$ into a $4$ dimensional space.}
    \label{tab:lens_iso_v_lpca}
\end{table*}

% \section{TO DOs}

% %\begin{remark}
% %\textbf{Question:} Can we do
% %\[
% %f_j(b,j,g) =
% %\left[
% %\sqrt{\varphi_1(b)}\zeta_q^{q_1(g + \eta_{j1})}
% %,\ldots,
% %\sqrt{\varphi_1(b)}\zeta_q^{q_n(g + \eta_{jn})}
% %\right]
% %\]
% %to get a map $f: B \longrightarrow L_q^n(q_1,\ldots, q_n)$
% %\end{remark}

% Luis' TODO: Outline
% - Introduction fleshing out the whole algorithm.
% - Theory: Up to definition of $f$.
% - Section: How to compute $f$ using Rips complex instead of Cech. We need classes on the sheaf cohomology, but if connected is the nerve.
% Distance in Lens space.

% Effect of cohomology representative on $f$ (isometry).

% Dimentionality reduction LPCA, visualization $S^3$ and stereographic projection.
% -Section:
% - Syn. Examples:
% Noisy $S^1$.
% $h: D^1\rightarrow  $

\subsection*{Acknowledgements}
This work was partially supported by the NSF under grant DMS-1622301.

\pagebreak

\bibliography{references}

\end{document}